\documentclass[12pt]{amsart}
\usepackage{graphicx, amsmath, fullpage, amssymb, amsthm, amsfonts, mathrsfs, eucal, moreverb, mathtools, float, bm, caption, subcaption, wrapfig}

\usepackage[p,osf]{cochineal}
\usepackage[scale=.95,type1]{cabin}
\usepackage[zerostyle=c,scaled=.94]{newtxtt}

\usepackage{enumerate}
\usepackage[left=2.3 cm,right=2.3cm,top=1.9cm,bottom=2.4cm]{geometry}

\newtheorem{theorem}{Theorem}[section]
\newtheorem{lemma}[theorem]{Lemma}
\newtheorem{prop}[theorem]{Proposition}
\newtheorem{cor}[theorem]{Corollary}

\theoremstyle{definition}

\usepackage[toc,page]{appendix} 
\usepackage[svgnames]{xcolor}
\definecolor{viridisblue}{HTML}{482878}
\definecolor{viridisgreen}{HTML}{35b779}
\definecolor{viridisgreen2}{HTML}{21918c}
\theoremstyle{remark}
\newtheorem{remark}[theorem]{Remark}
\numberwithin{equation}{section}
\usepackage[colorlinks,
            linkcolor=viridisgreen,
            anchorcolor=viridisgreen2,
            citecolor=viridisblue
            ]{hyperref}
\usepackage{multirow}
\usepackage{todonotes}

\newcommand{\ii}{\mathrm{i}}
\newcommand{\dd}{\mathrm{d}}

\newcommand{\bu}{\mathbf{u}}
\newcommand{\bv}{\mathbf{v}}

\newcommand{\erfc}{\mathrm{erfc}}

\newcommand{\dtv}{\mathrm{d}_{\mathrm{TV}}}
\newcommand{\R}{\mathbb{R}}

\newcommand{\PR}{\mathrm{IPR}}
\newcommand{\IPR}{\mathrm{IPR}}
\newcommand{\Pb}{\mathbf{P}}
\newcommand{\Eb}{\mathbf{E}}
\newcommand{\e}{\mathrm{e}}

\usepackage{tikz}
\usetikzlibrary{shapes.misc}
\usetikzlibrary{shapes.symbols}
\usetikzlibrary{shapes.geometric}
\usetikzlibrary{decorations.markings}
\tikzset{
	dot/.style={circle,fill=black,draw=black,inner sep=.5mm,thick},
	ghostdot/.style={circle,fill=white,draw=black,inner sep=.5mm,thick},
	indot/.style={rectangle,fill=white,draw=black,inner sep=.75mm,thick},
    gluedot/.style={rectangle,fill=white,draw=black,densely dashed,inner sep=1mm,thick},
	outdot/.style={regular polygon, regular polygon sides=3,fill=white,draw=black,inner sep=0.4mm,thick},
	->-/.style={decoration={markings,mark=at position 0.5 with {\arrow{>}}},postaction={decorate}}
}
\definecolor{col1}{HTML}{000000}
\definecolor{col2}{HTML}{EE33FF}
\definecolor{col3}{HTML}{009988}
\definecolor{col4}{HTML}{FF7733}
\definecolor{col5}{HTML}{3399EE}
\definecolor{col6}{HTML}{EE3377}
\definecolor{col7}{HTML}{AAAAAA}

\DeclareMathOperator{\Tr}{\mathrm{Tr}}
\title{The delocalization of eigenvectors of real elliptic matrices}
\author{Lucas Benigni}
\thanks{Lucas Benigni is supported by NSERC RGPIN 2023-03882 \& DGECR 2023-00076 and a FRQNT Etablissement de la Relève Professorale 364387. Guillaume Dubach gratefully acknowledges support from the Fondation de l'Ecole Polytechnique, as well as from the Agence Nationale de la Recherche, ANR-25-CE40-5672 and ANR-25-CE40-1380.}
\address{D\'epartement de Math\'ematiques et Statistique, Universit\'e de Montr\'eal, Montr\'eal (Qu\'ebec), Canada}
\email{lucas.benigni@umontreal.ca}
\author{Simon Coste}
\address{Laboratoire de Probabilités, Statistique et Modélisation, Universit\'e Paris Cit\'e, Paris, France}
\email{simon.coste@u-paris.fr}
\author{Guillaume Dubach}
\address{Centre de Mathématiques Laurent Schwartz, \'Ecole Polytechnique, Palaiseau, France}
\email{guillaume.dubach@polytechnique.edu}
\date{\today}

\begin{document}

\begin{abstract}
    We investigate delocalization phenomena for eigenvectors of real random matrices that are invariant by \emph{orthogonal} transformations. 
    A specific phenomenon with these ensembles is that an eigenvector is typically more localized when its eigenvalue is closer to the real axis -- while for unitarily invariant ensembles, all eigenvectors are delocalized at the same level.
    More precisely, we measure the delocalization level of a vector $x\in \mathbb{C}^N$ using the Inverse Participation Ratio $\PR(x) = N|x|_4^4 / |x|_2^4 \geqslant 1$. A higher IPR means a more localized vector. Using the exact distribution of the Schur decomposition of some paradigmatic rotation-invariant matrix models, we prove that conditionally on having an eigenvalue $\lambda$ with $|\mathfrak{Im}(\lambda)| = y / \sqrt{N}$, the IPR of the associated eigenvector converges in distribution towards a random variable $\ell_y$ with an explicit density depending only on $y$. We then prove that $\ell_y \to 3$ when $y \to 0$ and $\ell_y \to 2$ when $y\to +\infty$, coherently with the observed phenomenon. 

    This result is explicitly proved for higher-order IPRs and for the real Elliptic Ginibre ensemble at every non-symmetry parameter $\tau \in [0,1[$, including the classical real Ginibre ensemble ($\tau=0$). 
\end{abstract}

\maketitle

\begin{figure}[H]
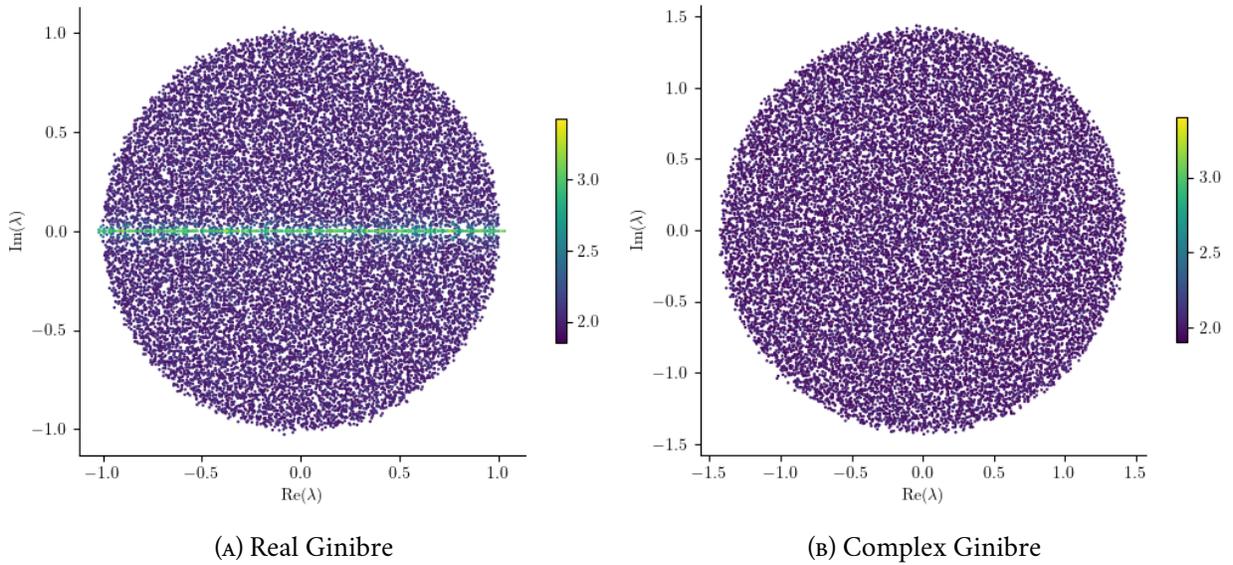

    \centering
    \begin{subfigure}[b]{0.48\textwidth}
        \centering
        \includegraphics[width=\textwidth]{rge.png}
        \caption{Real Ginibre}
        \label{fig:rge}
    \end{subfigure}
    \begin{subfigure}[b]{0.48\textwidth}
        \centering
        \includegraphics[width=\textwidth]{cge.png}
        \caption{Complex Ginibre}
        \label{fig:cge}
    \end{subfigure}
    \hfill
       \caption{Eigenvalues of 10 realizations of $N\times N$ matrices ($N=2000$) with real entries (left) and complex entries (right). The color of each eigenvalue represents its \emph{inverse participation ratio}, a measure of its delocalization. For the Real Ginibre ensemble, eigenvectors with eigenvalues close to $\mathbb{R}$ are visibly more localized than the others. }
       \label{fig:three graphs}
\end{figure}

\newpage
\section{Introduction}

It is now well understood that the eigenvectors of many models of random matrices are delocalized \cite{erdos2009local, benigni20222optimal, rudelson2015delocalization, optimal_deloc}, in the sense that their total mass is not concentrated in very few indices. 

For non-Hermitian $N\times N$ random matrices with iid entries, the optimal delocalization was proved by Cipolloni and Landon in \cite{cipolloni2023optimal}, improving on the seminal result of Rudelson and Vershynin \cite{rudelson2015delocalization}: any eigenvector $v$ satisfies the bound $|v|_\infty = O(\sqrt{\log N / N})$ with very high probability, under weak assumptions on the distribution of the entries of the matrix. We note that other forms of delocalization exist, for instance no-gaps delocalization which bounds the probability that there are holes within an eigenvector \cite{rudelson2016nogaps, luh2020eigenvector, lytova2020delocalization}.

However, these results do not give a sharper view on \emph{how} exactly the eigenvectors are delocalized across the spectrum. When visualizing the level of delocalization (measured with, say, the $\ell_\infty$ norm, or the kurtosis), it is striking that, for matrices with real entries, the eigenvectors associated with eigenvalues close to the real axis tend to be more localized. Matrices with complex entries seem to have all their eigenvectors delocalized in the same way (which is exactly the case for ensembles with unitary invariance). The difference is particularly visible for Ginibre matrices, as shown in Figure \ref{fig:rge} for real $\mathscr{N}_{\mathbb{R}}(0,1/N)$ entries and in Figure \ref{fig:cge} for complex $\mathscr{N}_{\mathbb{C}}(0,1/N)$ entries. This phenomenon is not restricted to Gaussian matrices, and can be observed in many non-Hermitian models, such as random graphs, charged Ginibre ensembles, or elliptic ensembles, as illustrated in Section \ref{sec:illu}. 

\subsection*{Contribution}In this paper, we give a detailed explanation of this phenomenon (namely, of a relatively higher localization near the real axis) by computing the exact distribution and the limits of the IPR for real Elliptic Ginibre matrices. We expect that these results hold more generally for a variety of orthogonally invariant real random matrix models -- although orthogonal invariance is not enough for this phenomenon to occur. 

\subsection*{Related work}

Eigenvalues and eigenvectors of random matrices have attracted considerable attention from physics and mathematics since the 1950's and particularly in the last twenty years. Most of this attention has been drawn towards Hermitian random matrices until recently, and the eigenvectors of non-Hermitian (or, more specifically, non-normal) matrices remain, to a great extent, quite mysterious. For general (non-integrable) models, the eigenvectors were proved to be delocalized in~\cite{rudelson2015delocalization}, and the optimal delocalization order (namely, $\sqrt{\log N/N}$) was eventually proved in \cite{cipolloni2023optimal}.  

Apart from delocalization, another line of work concerning eigenvectors of various models of non-Hermitian matrices is the study of their Gram matrix (i.e. scalar products interpreted as angles between eigenvectors, studied for instance in \cite{Benaych_Zeitouni}), and the study of the \emph{matrix of overlaps} between eigenvectors, which is the Hadamard product of a Gram matrix of left eigenvectors and a Gram matrix of right eigenvectors. More simply, the entries of this matrix $\mathscr{O}$ can be defined as 
$$
\mathscr{O}_{ij} = \langle L_i | L_j \rangle \ \langle R_j | R_i \rangle
$$
for any bi-orthogonal choice of left eigenvectors $(L_i)_{i=1}^N$ and right eigenvectors $(R_j)_{j=1}^N$.
This object, which is deeply connected to eigenvalue stability, was introduced by Chalker and Mehlig (\cite{chalker1998eigenvector, mehlig2000statistical}) and more recently studied in integrable ensembles \cite{Akemann_determinantal_O, bourgade2020distribution, CrawfordRosenthal, fyodorov2018statistics, WaltersStarr}. There is now a vast literature on eigenvector overlaps; we point to \cite{cipolloni2024optimal, osman2024universality, osman2025bulk, dubova2025gaussian} and references therein for the latest developments and universal results. Most of the works on integrable ensembles consider \emph{complex} random matrices; the phenomena related to \emph{real} non-Hermitian matrices were less studied, mostly due to the fact that the Schur decomposition worked out by Edelman \cite{edelman1997probability} is more intricate. However, a series of papers uses the Schur decomposition in conjunction with supersymmetry to compute the distribution of eigenvector observables such as diagonal overlaps for various models of non-Hermitian random matrices, including real ones: \cite{fyodorov2018statistics, tarnowski2022real, tarnowski2024condition, crumpton2025mean, wurfel2024mean}. It could be enlightening to use these techniques for computing the moments of the eigenvectors, such as the IPR, that is the subject of this paper.

\section{Results}

\subsection{Measuring localization through the IPR}

Our goal is to study the delocalization of eigenvectors of random matrices, measured using the \emph{inverse participation ratio} defined as, for $X\in \mathbb{C}^N$,
\begin{equation}\label{def:ipr}
    \PR_q(X) = \frac{N^{q-1}|X|_{2q}^{2q}}{|X|_2^{2q}}
\end{equation}
where $|\cdot|_p$ denotes the $p$-norm over $\mathbb{C}^n$. By the Cauchy--Schwarz inequality,  $\PR_q(X)=N$ if and only if $X$ has only one non-zero entry, indicating \emph{pure localization}, while $\PR_q(X)=1$ if and only if $X$ is proportional to $\mathbf{1}=(1, \dotsc, 1)$, indicating \emph{pure delocalization}. The quantity $\IPR_2$ is known as the \emph{rescaled kurtosis} and is popular in the physics literature (see \cite{evers2008anderson, evers2000fluctuations} for example). Other measures of delocalization exist, such as the $\ell^\infty$-norm as studied in \cite{rudelson2015delocalization, hoivu, optimal_deloc}.

\subsection{Eigenvectors of invariant ensembles}

Let $G$ be a complex random $N \times N$ matrix, $\lambda \in \mathbb{C}$ be one of its eigenvalues, and $R_\lambda$ an associated unit-norm right eigenvector, by which we mean that 
$$
G R_{\lambda} = \lambda R_{\lambda}, \qquad |R_{\lambda}|_2 = 1.
$$
If we also assume that $G$ is unitarily invariant, i.e. its distribution is invariant by conjugation by any unitary matrix $U$, that is
$$
\forall U \in U_N(\mathbb{C}) \qquad G \stackrel{\mathrm{(d)}}{=} UGU^*, 
$$
then the right eigenvectors are invariant by left multiplication by $U$, meaning that $R_{\lambda}$ is an eigenvector of $G$ for eigenvalue $\lambda$ if and only if $UR_{\lambda}$ is an eigenvector of $UGU^*$ for the same eigenvalue $\lambda$.
It follows that, conditionally on $\lambda$ being an eigenvalue of $G$, $R_{\lambda}$ can be chosen uniformly\footnote{However, note that the eigenvector is defined up to a phase, which one should choose uniformly for this fact to hold.} on the complex unit sphere
$$
\mathbb{S}^{N-1}_{\mathbb{C}} := \{ u = (u_i) \in \mathbb{C}^N \ : \ |u_1|^2 + \dotsb + |u_N|^2 = 1\}.
$$
When the matrix $G$ is \emph{orthogonally} invariant, i.e. 
$$
\forall O \in O_N(\mathbb{R}) \qquad G \stackrel{\mathrm{(d)}}{=} OGO^*, 
$$ then the eigenvectors of $G$ are \emph{not} uniformly distributed over the complex sphere. They can have complex entries with any choice of phase, so in general they also do not belong to the real sphere, 
$$\mathbb{S}^{N-1}_{\mathbb{R}} = \{ u = (u_i) \in \mathbb{R}^N \ : \  u_1^2 + \dotsb + u_N^2 = 1\}.$$
However, the eigenvectors of $G$ associated with a \emph{real} eigenvalue $\lambda \in \mathbb{R}$ can be chosen with a phase that makes them real: for instance, if one chooses a phase that makes the first coefficient real and equally likely to be positive or negative, then the associated eigenvector is automatically uniformly distributed on $\mathbb{S}^{N-1}_{\mathbb{R}}$.

To summarize: eigenvectors of unitarily invariant matrices are (up to a phase) uniformly distributed on $\mathbb{S}^{N-1}_{\mathbb{C}}$; eigenvectors of orthogonally invariant matrices are (up to a sign) uniformly distributed on $\mathbb{S}^{N-1}_{\mathbb{R}}$ \emph{if their eigenvalue is real}; but eigenvectors of orthogonally invariant matrices are, in general, not uniformly distributed on $\mathbb{S}^{N-1}_{\mathbb{C}}$ nor on $\mathbb{S}^{N-1}_{\mathbb{R}}$, and our present goal is to describe the interpolation between these two extreme cases. . 

\bigskip

In this context, it is worth recalling how the localization of a random uniform vector $U$ on the unit sphere depends of the field, $\mathbb{R}$ or $\mathbb{C}$. We recall the double factorial notation,  
$$(2q-1)!! := (2q-1) \times (2q-3) \times \cdots \times 1 = \frac{(2q)!}{2^q q!}. $$
The following elementary result shows that complex vectors are less localized than their real counterparts. 
\begin{lemma}Let $U$ be a uniform vector on $\mathbb{S}^{N-1}_{\Bbbk}$. Then, almost surely, 
\begin{equation}\label{pure_haar}
    \lim_{N\to\infty}\PR_q(U) = \begin{cases}
        {(2q-1)!!} \text{ if } \Bbbk = \mathbb{R} \\[1ex] {q!} \text{ if } \Bbbk = \mathbb{C}. 
    \end{cases}
\end{equation}
For instance, the limit of the rescaled kurtosis $\IPR_2(U)$ is 3 for real vectors and 2 for complex vectors. 
\end{lemma}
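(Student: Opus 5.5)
We represent the uniform vector on the sphere as a normalized Gaussian vector and apply the strong law of large numbers. Let $g=(g_1,\dots,g_N)$ have i.i.d.\ entries, standard real Gaussians $\mathscr{N}_{\mathbb{R}}(0,1)$ if $\Bbbk=\mathbb{R}$, and standard complex Gaussians $\mathscr{N}_{\mathbb{C}}(0,1)$ with $\mathbf{E}|g_i|^2=1$ if $\Bbbk=\mathbb{C}$. By rotation invariance of the Gaussian law (orthogonal, resp.\ unitary), $g/|g|_2$ is uniform on $\mathbb{S}^{N-1}_{\Bbbk}$. Since $\PR_q$ is invariant under scaling, we may take $U=g/|g|_2$ and write
$$
\PR_q(U)=\PR_q(g)=\frac{\frac1N\sum_{i=1}^N |g_i|^{2q}}{\bigl(\frac1N\sum_{i=1}^N|g_i|^2\bigr)^{q}}.
$$

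The one point needing care is the meaning of ``almost surely'' as $N$ varies. We handle it by coupling: realize all dimensions on one probability space from a single i.i.d.\ sequence $(g_i)_{i\ge1}$, using its first $N$ terms in dimension $N$. The limit statement concerns only the marginal law of $\PR_q(U)$ for each $N$, so this coupling is legitimate. Under it, the strong law of large numbers applies, because $|g_i|^{2q}$ has all moments finite. Almost surely the numerator tends to $\mathbf{E}|g_1|^{2q}$ and the denominator tends to $(\mathbf{E}|g_1|^2)^q=1$, so by continuity $\PR_q(U)\to \mathbf{E}|g_1|^{2q}$.

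It remains to compute the moments.

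\emph{Real case.} The classical Gaussian moment formula gives $\mathbf{E}g_1^{2q}=(2q-1)!!$.

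\emph{Complex case.} Write $g_1=(X+\ii Y)/\sqrt2$ with $X,Y$ i.i.d.\ $\mathscr{N}_{\mathbb{R}}(0,1)$. Then $|g_1|^2=(X^2+Y^2)/2$ is a standard exponential variable, as half a $\chi^2_2$. Hence $\mathbf{E}|g_1|^{2q}=\int_0^\infty t^q\e^{-t}\,\dd t=q!$.

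For $q=2$ this gives $3$ and $2$ respectively, which proves the example stated in the lemma. No step here is a genuine obstacle.
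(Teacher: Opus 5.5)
Your proposal is correct and follows the paper's proof essentially verbatim. You represent $U$ as $V/|V|_2$ with i.i.d.\ Gaussian entries, apply the law of large numbers to the numerator and denominator of $\PR_q$, and read off the limit $\mathbf{E}|V_1|^{2q}$, which equals $(2q-1)!!$ or $q!$; your explicit nested coupling and the exponential-variable computation of the complex moment spell out details the paper leaves implicit.

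One small correction: almost sure convergence is not determined by the marginal laws for each $N$. The nested coupling is therefore an interpretive choice (the one the paper tacitly makes), not something the marginals justify. A coupling-free version follows by noting that, via fourth-moment bounds on the empirical averages, the probability of an $\varepsilon$-deviation of $\PR_q(U)$ from its limit is $O(N^{-2})$, hence summable in $N$, and then applying Borel--Cantelli.
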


\begin{proof}
    By the rotational invariance of Gaussian variables, $U$ has the same distribution as $V / |V|_2$ where $V_i \sim \mathscr{N}_\Bbbk (0,1)$, all of the entries of $V$ being independent. By the law of large numbers, 
    \[ \PR_q\left(
            \frac{V}{|V|_2}
        \right) 
        =
        \frac{\frac{1}{N}\sum_{i=1}^N |V_i|^{2q}}
        {\left(
            \frac{1}{N}\sum_{i=1}^N \vert V_i\vert^2
        \right)^q}
        \to
        \frac{\mathbf{E}[|V_1|^{2q}]}{\mathbf{E}[|V_1|^2]^q} 
        =
        {\mathbf{E}[|V_1|^{2q}]}.
        \]
    If $\xi \sim \mathscr{N}_{\mathbb{R}}(0,1)$, then $\mathbf{E}[|\xi|^{2q}] = (2q-1)!!$; and if $\xi \sim \mathscr{N}_{\mathbb{C}}(0,1)$, then $\mathbf{E}[|\xi|^{2q}] = q!$. 
 To put it differently, one recovers the moments of real (resp. complex) Gaussian variables, because uniform vectors can be sampled from vectors with i.i.d. Gaussian entries.
\end{proof}

It turns out that eigenvectors of the Ginibre or Real Elliptic ensembles will be shown to interpolate between these two cases, depending on the amplitude of the imaginary part of the associated eigenvalue: eigenvectors of real eigenvalues are uniformly distributed on $\mathbb{S}^{N-1}_\mathbb{R}$, and eigenvectors in the ``complex bulk’’ are asymptotically uniformly distributed on the complex sphere $\mathbb{S}_{\mathbb{C}}^{N-1}$. The interpolation between both regimes happens continuously in the \emph{depletion regime}, a band of height $O(1/\sqrt{N})$ around the real axis. In this regime, eigenvectors of an eigenvalue $\lambda$ with $\mathfrak{Im}(\lambda) = y/\sqrt{N}$ are complex unit vectors, but they are distributed non-uniformly on $\mathbb{S}_{\mathbb{C}}^{N-1}$: they concentrate around real elements of $\mathbb{S}_{\mathbb{C}}^{N-1}$ when $y$ is small, and they spread over all $\mathbb{S}_{\mathbb{C}}^{N-1}$ when $y$ is large. 

\subsection{The real Elliptic Ginibre ensemble}

All our results will be exactly proved for an important generalization of the Ginibre models, which is the real Elliptic Ginibre ensemble (REG). We follow the presentation given in \cite[Section 7.9]{Byun2025}. 

Let $S=S^\top$ be a matrix from the Gaussian Orthogonal ensemble i.e $(H_{ij})_{i\geqslant j}$ are independent with $H_{ij}\sim \mathscr{N}_\R\left(0,1+\delta_{ij}\right)$ and let $A=-A^\top$ be an antisymmetric matrix such that $(A_{ij})_{i>j}$ are iid $\mathscr{N}_\R(0,1)$ random variables; then we construct our ensemble as 
\begin{equation}\label{eq:defelliptic}
X_\tau = \frac{1}{\sqrt{2N}}\left(\sqrt{1+\tau}H + \sqrt{1-\tau}A\right),
\end{equation}
which can also be written as $\sqrt{(1+\tau)/2N}(H + \sqrt{c}A)$ where $c = (1-\tau)/(1+\tau)$. The random matrices $S$ and $A$ are invariant by orthogonal conjugation, hence so is the REG. The model interpolates between the real Ginibre ensemble ($\tau = 0$) and the classical real symmetric GOE ($\tau = 1$). Our results will be valid in the whole range of $\tau \in [0,1[$.

\subsection{Limiting IPR distribution for the real Elliptic Ginibre ensemble}

Our results for $\PR_q$ are formulated in terms of 

\begin{enumerate}
    \item[(A)] the Legendre polynomials $(L_q)_{q \in \mathbb{N}}$, which are the orthogonal polynomials associated with the uniform weight $w(x) = \mathbf{1}_{x \in [-1, 1]}$ over the interval $[-1,1]$ and 
    \item[(B)] a family of random variables $(S_y)_{y \in \mathbb{R}_+}$, with $S_y$ being distributed as a $\mathscr{N}(0,(1-\tau^2)/4y^2)$ conditioned on being greater than 1 (see \eqref{eq:density_Sy} for the exact density). 
\end{enumerate}

\begin{theorem}\label{thm:main}
    Let $G$ be an $N\times N$ matrix from the real Elliptic Ginibre ensemble for any $\tau \in [0,1[$, and let $R_\lambda$ be a unit eigenvector associated with an eigenvalue $\lambda$ of $X$.
    \begin{itemize}
        \item (Away from real axis) Conditionally on $\lambda= x+\ii y$ with fixed $ x \in \R , y >0$, 
    $$\PR_q(R_\lambda) \xrightarrow[N\to \infty]{\mathrm{a.s.}} q!.$$
    \item (Real axis) Conditionally on $\lambda= x $ with $ x \in \R $, 
    $$\PR_q(R_\lambda) \xrightarrow[N\to \infty]{\mathrm{a.s.}} (2q-1)!!.$$
    \item (Depletion regime) Conditionally on $\lambda= x + \ii y/\sqrt{N}$ with fixed $ x \in \R , y >0$, we have
    \begin{equation}
        \PR_q(R_\lambda) \xrightarrow[N\to \infty]{\mathrm{(d)}} \ell_{q,y}
    \end{equation}
    where $\ell_{q,y}$ is the positive random variable defined by 
    \begin{equation}\label{def:ell}\ell_{q,y} = q! S_y^{-q}L_q(S_y).\end{equation}
    \end{itemize}
    The family $(\ell_{q,y})_{y \in \mathbb{R}_+}$ interpolates between the bulk regime and the real axis regime, in the sense that for any $y \in ]0,\infty[$ one has $q! < \ell_{q,y} < (2q-1)!!$, and that at the boundaries, 
    \begin{align}
        &\ell_{q,y} \xrightarrow[y \to 0]{\Pb} (2q-1)!!, && \ell_{q,y} \xrightarrow[y \to \infty]{\Pb} q!. 
    \end{align}
\end{theorem}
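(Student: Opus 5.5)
The plan is to reduce the eigenvector to a $2\times 2$ problem via Edelman's incomplete real Schur decomposition, and then apply a law of large numbers on the sphere. For an eigenvalue $\lambda=x+\ii v$ with $v>0$, we write
$$
G = Q\begin{pmatrix} B & W\\ 0 & G_1\end{pmatrix}Q^\top, \qquad B=\begin{pmatrix} x & b\\ -c & x\end{pmatrix},\quad bc=v^2,\ b\geqslant c>0,
$$
where $Q$ is a product of Householder reflections. By orthogonal invariance of the REG, the first two columns $(e_1,e_2)$ of $Q$ form a uniform orthonormal $2$-frame of $\R^N$, independent of $(B,W,G_1)$. The eigenvector of $B$ for $\lambda$ is proportional to $(\sqrt b,\ \ii\sqrt c)$. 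Hence
$$
R_\lambda = \frac{\sqrt b\, e_1+\ii\sqrt c\, e_2}{\sqrt{b+c}}.
$$
Only the pair $(b,c)$ matters, or equivalently the single parameter $S:=\frac{b+c}{2\sqrt{bc}}\geqslant 1$. Writing $e_1,e_2\approx (g_i,h_i)_i/\sqrt N$ with $g_i,h_i$ i.i.d.\ $\mathscr N_\R(0,1)$, after a Gram--Schmidt correction that is negligible in the limit, the argument of the Lemma above gives, for fixed $S$,
$$
\PR_q(R_\lambda)\to \mathbf E\big[(\alpha g^2+\beta h^2)^q\big],\qquad \alpha=\tfrac{b}{b+c},\ \beta=\tfrac{c}{b+c},\ \alpha+\beta=1,\ \alpha\beta=\tfrac1{4S^2}.
$$

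Next we identify this moment. Using the generating function,
$$
\mathbf E\, e^{t(\alpha g^2+\beta h^2)}=(1-2t\alpha)^{-1/2}(1-2t\beta)^{-1/2}=\big(1-2t+t^2/S^2\big)^{-1/2}=\sum_{q\ge0}L_q(S)\,(t/S)^q,
$$
by the Legendre generating function $(1-2su+u^2)^{-1/2}=\sum_q L_q(s)u^q$ with $u=t/S$. Hence the $q$-th moment equals $q!\,S^{-q}L_q(S)$, which is exactly \eqref{def:ell}. As a check, for $q=2$ this gives $3-S^{-2}$.

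The remaining input is the law of $b-c$ conditionally on $\lambda$. From Edelman's computation, adapted to the elliptic case as in \cite[Section 7.9]{Byun2025}, the conditional density of $\delta=b-c$ given $\Im\lambda=v$ is proportional to
$$
\frac{\delta}{\sqrt{\delta^2+4v^2}}\,e^{-N\delta^2/(2(1-\tau^2))}
$$
up to the exact elliptic constants, which I would recompute. Since $S=\sqrt{\delta^2+4v^2}/(2v)$, the change of variables $\delta\mapsto S$ cancels the Jacobian factor. This gives a density proportional to $e^{-2Nv^2S^2/(1-\tau^2)}\mathbf 1_{S>1}$, a Gaussian conditioned to exceed $1$.
\begin{itemize}
\item With $v=y/\sqrt N$, this is exactly the law of $S_y$, which gives the depletion regime.
\item With $v=y$ fixed, we get $S\to1$ in probability. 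Since $L_q(1)=1$, the limit is $q!$.
\item For a real $\lambda$, the eigenvector is uniform on $\mathbb S^{N-1}_\R$, and the Lemma gives $(2q-1)!!$.
\end{itemize}
Almost sure convergence in the first case follows from concentration of $\sum_i|\cdot|^{2q}$, uniformly over $\alpha\in[0,1]$. Convergence in distribution in the depletion regime follows because the LLN limit is a continuous function of $S$ and the convergence is uniform in $S$ on compacts.

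For the interpolation statements, the key observation is that $\phi(\alpha)=\mathbf E[(\alpha g^2+(1-\alpha)h^2)^q]$ is strictly convex and symmetric about $\alpha=1/2$. It is therefore strictly increasing in $|\alpha-\tfrac12|$, that is, in $S$. Its extreme values are $\phi(1/2)=q!$, attained at $S=1$, and $\phi(1)=(2q-1)!!$, attained as $S\to\infty$; the value at $S=1$ equals $q!$ because $\tfrac12(g^2+h^2)$ is $\mathrm{Exp}(1)$. This yields $q!<\ell_{q,y}<(2q-1)!!$ almost surely. As $y\to0$, the variance $(1-\tau^2)/4y^2$ blows up, so $S_y\to\infty$ in probability. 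Then $S^{-q}L_q(S)$ tends to the leading coefficient $(2q)!/(2^q q!^2)$, giving $(2q-1)!!$. As $y\to\infty$, we have $S_y\to1$ and the limit is $q!$.

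I expect the main obstacle to be the precise conditional law of the $2\times2$ block given the eigenvalue for the elliptic ensemble. Specifically, one must check the independence of the frame $(e_1,e_2)$ from $B$ under conditioning on $\lambda$, and get the exact Gaussian exponent that produces the variance $(1-\tau^2)/4y^2$. For this I would redo Edelman's Jacobian computation for the density $\propto \exp(-\tfrac{N}{1-\tau^2}\Tr(XX^\top-\tau X^2))$, restricted to the $B$-block.
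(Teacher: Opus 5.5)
Your proposal is correct and follows the paper's architecture. The shared steps are the incomplete Schur decomposition with a uniform $2$-frame independent of the block, the eigenvector $\propto\sqrt b\,e_1+\ii\sqrt c\,e_2$, Edelman's Jacobian giving the conditional law of $b-c$, the change of variables to a Gaussian conditioned on $S>1$, and the Legendre generating function (your MGF computation is the paper's central-binomial one in different clothing). The obstacle you anticipate is resolved exactly as you suggest. Since $b^2+c^2+2\tau bc=(b-c)^2+2(1+\tau)bc$ and $\det((A_1-x)^2+bc)=|\det(A_1-\lambda)|^2$, the $A_1$-integral is a function of $\lambda$ alone, so all elliptic constants cancel in the conditional law.

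You depart from the paper in two sub-steps.

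\emph{The limit at fixed $S$.} The paper computes the exact conditional mean $\frac{N^q}{N(N+2)\cdots(N+2q-2)}\,q!S^{-q}L_q(S)$ from joint moments of Haar orthogonal entries. It then obtains a.s. convergence from Lipschitz concentration on the Stiefel manifold (good event, McShane extension, Borel--Cantelli). You instead realise the frame by Gram--Schmidt on Gaussian vectors and invoke the LLN. Your route is more elementary. The uniformity in $\alpha$ needed in the bulk is then immediate from the expansion $\frac1N\sum_i(\alpha u_i^2+(1-\alpha)w_i^2)^q=\sum_k\binom qk\alpha^k(1-\alpha)^{q-k}\frac1N\sum_i u_i^{2k}w_i^{2(q-k)}$ with $u=\sqrt N e_1$, $w=\sqrt N e_2$. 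However, it yields a.s. convergence only for that particular coupling, and it requires bounding the Gram--Schmidt term (e.g.\ $|\langle h,e_1\rangle|$ and $\max_i|g_i|$ are $O(\sqrt{\log N})$ a.s.). The paper's tail bounds are coupling-free and come with an exact finite-$N$ mean. For the bulk a.s.\ claim you also need $S_N\to1$ a.s.; this follows from the exponentially small tails $\Pb(S_N>1+\epsilon)$ and Borel--Cantelli. The paper itself only proves convergence in probability there.

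\emph{The interpolation.} The paper differentiates $x^{-q}L_q(x)$ via the Laplace integral $L_q(x)=\frac1\pi\int_0^\pi(x+\sqrt{x^2-1}\cos\theta)^q\,\dd\theta$. Your argument uses strict convexity and symmetry of $\phi$, the relation $|\alpha-\tfrac12|^2=\tfrac14(1-S^{-2})$, and the endpoint values $\Eb[E^q]=q!$ with $E\sim\mathrm{Exp}(1)$ and $\Eb[g^{2q}]=(2q-1)!!$. It is shorter and avoids special-function input. Both arguments need $q\geqslant2$ for the strict inequalities.
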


Note that in this Theorem, we do not condition on anything else than the eigenvalue $\lambda=  x + \ii y$: we do not impose that $\lambda$ lies inside the unit disk. Of course, the probability of $G$ having an eigenvalue at any location at macroscopic distance from the unit disk is exponentially small  in $N$, but conditionally on this event, the distribution of $\PR_q$ and its asymptotics in $N$ do only depend on $y$. 

We found the representation \eqref{def:ell} to be the more versatile; of course, one may use the explicit representations of the Legendre polynomials to work out the exact density of $\ell_{q,y}$ for any $q$ as in Lemma \ref{lem:legendrechangeofvar}. The expressions for small $q$ are easily found out and are shown in the following corollary; the explicit density for the limit of $\PR_2$ might be of independent interest. 

For the pure Ginibre case ($\tau=0$), Figure \ref{fig:density} shows that these distributions start concentrated around 3 when $y \approx0$, and become quickly concentrated around 2 when $y$ grows.

\begin{cor}\label{cor:smalllq}
    For the real Ginibre ensemble ($\tau=0$), the densities of $\ell_{q,y}$ for $q=2,3,4$ are given by
    \[
    \begin{aligned}
    &\gamma_{2,y}(\ell) = \frac{\sqrt{2}\vert y\vert}{\sqrt{\pi}\erfc(\sqrt{2}\vert y\vert)}
    \frac{\e^{-\frac{2y^2}{3-\ell}}}{(3-\ell)^{\frac{3}{2}}}\mathbf{1}_{2<\ell<3}\\[1ex]
    &\gamma_{3,y}(\ell)
    =
    \frac{3\sqrt{2}\vert y\vert}{\sqrt{\pi}\erfc(\sqrt{2}\vert y\vert)}\frac{\e^{-\frac{18y^2}{15-\ell}}}{(15-\ell)^{\frac{3}{2}}}
    \mathbf{1}_{6<\ell<15}\\
    &\gamma_{4,y}(\ell)
    =
    \frac{
        \sqrt{6}\vert y\vert
    }
    {
        2\sqrt{\pi}\erfc(\sqrt{2}\vert y\vert)
    }
    \frac{
    \e^{-\frac{6y^2}{15-\sqrt{120+\ell}}}
    }
    {
    \sqrt{120+\ell}(15-\sqrt{120+\ell})^{\frac{3}{2}} 
    }
    \mathbf{1}_{24<\ell<105}.
    \end{aligned}
    \]
\end{cor}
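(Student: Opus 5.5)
The corollary follows from the representation \eqref{def:ell} by a one-dimensional change of variables. Write $S=S_y$ and take $\tau=0$, so that $S$ is a $\mathscr{N}(0,1/4y^2)$ variable conditioned to exceed $1$. Its density is
\[
f(s)=C_y\,\e^{-2y^2s^2}\mathbf{1}_{s>1},\qquad C_y=\Big(\int_1^\infty \e^{-2y^2s^2}\,\dd s\Big)^{-1}=\frac{2\sqrt{2}\,|y|}{\sqrt{\pi}\,\erfc(\sqrt{2}|y|)},
\]
where the normalisation comes from the substitution $t=\sqrt2|y|s$. For each $q\in\{2,3,4\}$ I will insert the explicit Legendre polynomial into $\ell=q!\,S^{-q}L_q(S)$ and write $\ell=\phi_q(u)$ with $u=S^{-2}\in(0,1)$. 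I then check that $\phi_q$ is strictly monotone on $(0,1)$, invert it, and apply the density formula $\gamma_{q,y}(\ell)=f(s(\ell))\,|s'(\ell)|$.

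\textbf{Case $q=2$.} Here $L_2(s)=(3s^2-1)/2$, so $\ell=3-u$. This is decreasing in $u$, so $\ell$ ranges over $(2,3)$. Inverting gives $s=(3-\ell)^{-1/2}$ and $s'(\ell)=\tfrac12(3-\ell)^{-3/2}$. Multiplying by $C_y\e^{-2y^2/(3-\ell)}$ produces the constant $\sqrt2|y|/(\sqrt\pi\,\erfc(\sqrt2|y|))$, as claimed.

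\textbf{Case $q=3$.} Here $L_3(s)=(5s^3-3s)/2$, so $\ell=15-9u$, which ranges over $(6,15)$. Inverting gives $s=3(15-\ell)^{-1/2}$ and $s'(\ell)=\tfrac32(15-\ell)^{-3/2}$. The exponent becomes $-18y^2/(15-\ell)$, and the prefactor is $\tfrac32 C_y=3\sqrt2|y|/(\sqrt\pi\,\erfc(\sqrt2|y|))$.

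\textbf{Case $q=4$.} This is the only case needing care, because $\phi_4$ is quadratic in $u$. Here $L_4(s)=(35s^4-30s^2+3)/8$, so
\[
\ell=\phi_4(u)=105-90u+9u^2 .
\]
Since $\phi_4'(u)=-90+18u<0$ on $(0,1)$, the map is a bijection onto $(24,105)$. Solving $9u^2-90u+105-\ell=0$ and keeping the root lying in $(0,1)$ gives
\[
u=5-\tfrac13\sqrt{120+\ell},\qquad s^2=\frac{3}{15-\sqrt{120+\ell}} .
\]
Setting $w=15-\sqrt{120+\ell}$, we have $s=\sqrt3\,w^{-1/2}$ and $w'(\ell)=-1/(2\sqrt{120+\ell})$. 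Hence
\[
s'(\ell)=\frac{\sqrt3}{4\,w^{3/2}\sqrt{120+\ell}} .
\]
The exponent is $-2y^2s^2=-6y^2/w$, and the constant is $C_y\sqrt3/4=\sqrt6|y|/(2\sqrt\pi\,\erfc(\sqrt2|y|))$, matching the statement.

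The main obstacle is therefore only bookkeeping. One must choose the correct root in the $q=4$ case and track the Jacobian constants carefully. As a sanity check, each computed range of $\ell$ lies strictly between $q!$ and $(2q-1)!!$, consistent with Theorem~\ref{thm:main}.
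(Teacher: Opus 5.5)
Your proposal is correct and takes essentially the same route as the paper: a one-dimensional change of variables from $S_y$ (with density $C_y\e^{-2y^2s^2}\mathbf{1}_{s>1}$) to $\ell=q!\,S_y^{-q}L_q(S_y)$, using the explicit Legendre polynomials and choosing the root $u=5-\tfrac13\sqrt{120+\ell}$ for $q=4$. The only cosmetic difference is that the paper divides by $\Phi_q=\dd\ell/\dd s$ (Lemma~\ref{lem:legendrechangeofvar}), whereas you differentiate the inverse $s(\ell)$ directly and check monotonicity through $u=S^{-2}$; your normalization $C_y$ and all the Jacobian constants agree with the statement.
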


\begin{figure}[H]
  \centering
  \includegraphics[width=0.3\linewidth]{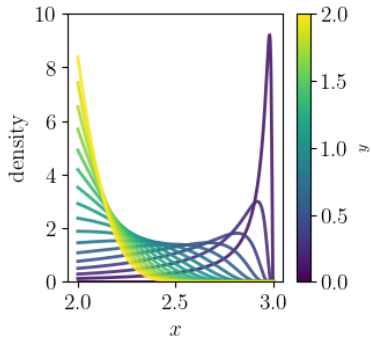}
  \caption{Some densities $\gamma_{2,y}$ from Corollary \ref{cor:smalllq} for various $y$ ranging from $0$ (violet) to $2$ (yellow). When $y$ is large, the distribution converges towards 2, while for small $y$ it converges towards 3. }\label{fig:density}
\end{figure}

\subsection{Extension to other orthogonally-invariant matrix ensembles and universality}\label{subsec:rotinv}
Although our main results are established for the real Elliptic Ginibre Ensemble only, the underlying mechanism naturally should extend to a larger class of real orthogonally invariant matrix models, and also be asymptotically verified in universal cases, i.e. as long as the entries of the matrix are assumed to be independent and under a few natural moments conditions. In this subsection, we explain how the asymptotic law of inverse participation ratios could be investigated beyond the elliptic setting.

A key feature of real non-Hermitian matrices with orthogonal invariance is that, conditionally on the presence of a complex eigenvalue $\lambda$, the matrix can be brought, by an orthogonal similarity transform, into a block upper-triangular form in which the $2\times2$ block associated with $\lambda$ reads
\[
\begin{bmatrix}
x & b \\
-\,c & x
\end{bmatrix},
\]
with $bc>0$ and $b\geqslant c$. The eigenvalues of this block are
\(
\lambda = x \pm i\sqrt{bc},
\)
and the remaining blocks correspond to the rest of the spectrum. This is called the Schur decomposition and it will be further detailed in Section \ref{sec:schur}. At this level, the precise global structure of the matrix plays no role; only the local $2\times2$ geometry associated with the complex eigenvalue matters.

Our results show that, conditionally on observing the eigenvalue
\(
\lambda = x + iy,
\)
with $y=\sqrt{bc}$, the inverse participation ratio of order $q$ of the corresponding eigenvector satisfies the exact identity
\begin{equation}\label{eq:2.5_exact}
\mathbb{E}[\mathrm{IPR}_q \mid \lambda] = q! S^{-q} L_q(S),
\end{equation}
where $L_q$ denotes the Legendre polynomial of order $q$ and where the scale parameter $S$ depends only on the coefficients $b$ and $c$ through
\[
S \;=\; \frac{1}{2}\,\frac{(b+c)^2}{\sqrt{bc}}.
\]
Importantly, \eqref{eq:2.5_exact} is entirely local: it depends neither on the rest of the matrix nor on the global spectral distribution, but only on the parameters entering the $2\times2$ block associated with $\lambda$. We also prove that $\IPR_q$ is exponentially concentrated around this mean value. 

From this perspective, extending the asymptotic distribution of the IPR to other orthogonally invariant ensembles reduces to a single problem: understanding the joint distribution of $(b,c)$ conditionally on the eigenvalue $\lambda$ and notably see if this distribution is $N$-dependent and only depends on $\lambda$. Once this conditional law is identified, the limiting distribution of the IPR follows from the concentration around \eqref{eq:2.5_exact}. 

In the real Elliptic Orthogonal Ensemble (which includes the real Ginibre Ensemble) this conditional distribution can be computed explicitly, which allows for a closed-form characterization of the IPR statistics. However, the argument suggests that other ensembles of interest should be accessible through the same route. In particular, models such as the chiral orthogonal ensemble \cite{akemann2008characteristic}, or more general orthogonally invariant non-Hermitian deformations, may admit tractable descriptions of the conditional law of $(b,c)$, leading to new universality classes for eigenvector localization.
\subsection{Plan of the paper}
\begin{itemize}
\item Section \ref{sec:illu} features illustrations of eigenvector delocalization for various models. 
    \item Section \ref{sec:schur} recalls the real Schur decomposition and its Jacobian. 
    \item Section \ref{sec:density} computes the density of the Schur elements for the real Elliptic Ginibre ensemble. 
    \item Section \ref{sec:proof} uses these distributions to derive an exact, non-asymptotic representation of $\PR_q$ conditioned on a specific eigenvalue, and studies its properties. 
    \item Section \ref{sec:convergence} proves the concentration of the IPR and its almost sure convergence towards its mean.  
    \item Section \ref{sec:proof_main} proves the three points of the main theorem. 
\end{itemize}

\section{Illustrations for various models}\label{sec:illu}

Figure \ref{fig:three graphs} shows the IPR of the Real and Complex Ginibre ensemble and Figure \ref{fig:ell} shows the same for the real Elliptic Ginibre ensemble. 

\begin{figure}
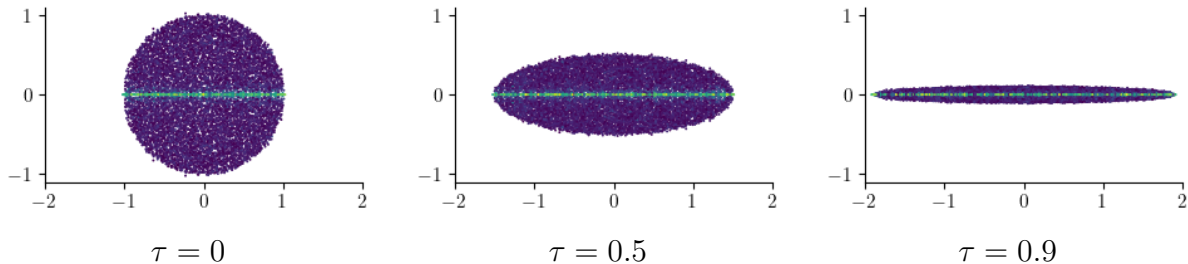

\begin{tabular}{ccc}
     \includegraphics[width=0.3\textwidth]{real_elliptic_ensemble_0.0.png}& \includegraphics[width=0.3\textwidth]{real_elliptic_ensemble_0.5.png}
     & \includegraphics[width=0.3\textwidth]{real_elliptic_ensemble_0.9.png}\\
     $\tau=0$ & $\tau=0.5$ & $\tau=0.9$
\end{tabular}
\caption{Three spectra from the real Elliptic Ginibre ensemble at different hermiticity levels $\tau$. The colorbar is the same as in Figure \ref{fig:three graphs}. }\label{fig:ell}
\end{figure}

However, the method we use goes beyond this model or the Elliptic ensemble model, and could in principle apply to any rotation-invariant model--- the only limitation being that we would need to compute the exact distribution of the $2\times 2$ blocks in the real Schur decomposition, as explained in Subsection \ref{subsec:rotinv}. In this section, we illustrate the phenomenon for various models. 

The Induced real Ginibre ensemble with integer charge parameter $\nu$ is the random matrix defined by $G = U \sqrt{X X^\top}$, where $X$ is random $N \times (N + \nu)$ matrix with iid $\mathscr{N}_{\mathbb{R}}(0,1)$ entries, and $U$ is Haar-distributed on $O_N(\mathbb{R})$; this model was studied in \cite{fischmann2012induced}; its exact Schur block density is worked out in \cite{ipsen2015products} and can be used to derive exact results as our main theorem. See top left figure of Figure \ref{fig:othermodels}. 

Sums of Haar-distributed orthogonal matrices are also invariant by rotations, but to our knowledge, there is no closed formula for its Schur decomposition. See top right figure of Figure \ref{fig:othermodels}. 

A discrete version of this model is given by the ``sum-of-permutations'' model. This model is often considered a good proxy for a directed regular graph model, see \cite{coste_rrg}. It is only invariant by row and column permutations, but not by rotations; however, we still observe the same phenomenon where eigenvectors close to the real axis seem to be less delocalized, see bottom left figure of Figure \ref{fig:othermodels}. When the distribution of the permutation matrices is no longer uniform, what happens is not clear. Bottom right figure of Figure \ref{fig:othermodels} shows the IPR of a sum of Ewens-distributed permutations; these random matrices are no longer invariant by row/column permutations. The eigenvectors associated with eigenvalues close to the extremal parts of the pear-shaped spectrum seem to be extremely localized. We have no intuitions on this. 

\begin{figure}[!ht]
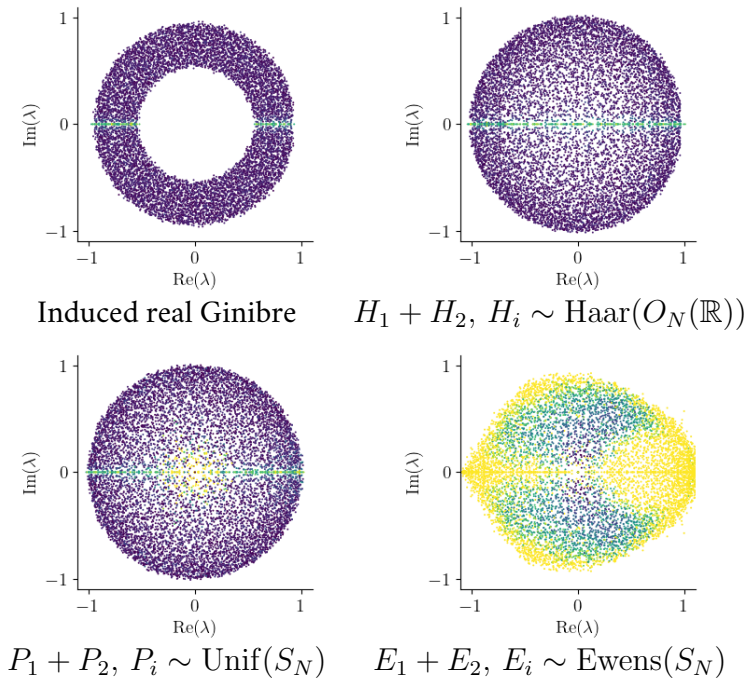

    \centering
    \begin{tabular}{cc}
       \includegraphics[width=0.24\textwidth]{induced_ginibre_500.png} &  \includegraphics[width=0.24\textwidth]{sum_of_2_oe.png}
        \\[-1ex]
       Induced real Ginibre & $H_1+H_2, \, H_i\sim\mathrm{Haar}(O_N(\R))$ \\[1ex]
\includegraphics[width=0.24\textwidth]{sum_of_2_permutation_matrices.png}&\includegraphics[width=0.24\textwidth]{sum_of_2_ewens_matrices.png} \\[-1ex]
 $P_1+P_2,\,P_i\sim\mathrm{Unif}(S_N)$ & $E_1+E_2,\, E_i \sim \mathrm{Ewens}(S_N)$ 
    \end{tabular}
    \caption{Spectra of other models of non-Hermitian random matrices. The scale for the IPRs is exactly the same as in \ref{fig:rge} (violet is 2 and yellow is 3 or more). Spectra are normalized to be in $D(0,1)$. }
    \label{fig:othermodels}
\end{figure}

\section{The Schur decompositions of real matrices}\label{sec:schur}

The main tool for studying real random matrices invariant by rotations, such as the real Ginibre ensemble, is the real Schur decomposition. In this section, we recall its main properties and we describe the probability distributions of the different elements that will be studied later. Most of those results can be found in Edelman’s seminal paper \cite{edelman1997probability} or in \cite{mehta2004random}. We will start by recalling the real Schur decomposition, and we will then move on to the \emph{incomplete} Schur decomposition which will actually be the one used in our paper for density computations. 

\subsection{The elements of the Schur decomposition}\label{subsec:schur}

The real Schur decomposition expresses any real matrix $G$ as orthogonally similar to a block-triangular matrix $T$, with diagonal blocks of size 2 or 1, i.e $G = OTO^\top$ for some $O \in O_N(\mathbb{R})$; here $T$ is an upper-block-triangular matrix, with $m$ blocks of size $2\times 2$, \begin{align}\label{B}&B_k = \begin{bmatrix}x_k & b_k \\ -c_k & x_k\end{bmatrix}&& b_k c_k>0 \text{  and  } b_k \geqslant c_k \end{align} and $r=N-2m$ blocks of size $1\times 1$, say $B_k=[w_k]$ with $w_k$ real. The $2\times 2$ blocks will sometimes be called ``complex blocks’’. Lemma 4.2 in \cite{edelman1997probability} shows that the shape of $B_k$ in \eqref{B} is essentially unique: any $2\times 2$ matrix with real entries and non-real eigenvalues is orthogonally similar to a unique matrix shaped like \eqref{B}.

The $N=r + 2m$ eigenvalues of $T$ are the real eigenvalues $w_1, \dotsc, w_r$, plus the pairs of complex conjugate eigenvalues of the $m$ blocks of shape \eqref{B}. These eigenvalues are derived in Appendix \ref{app:2x2}:
\begin{align}
    &\lambda_k = x_k + i \sqrt{b_k c_k} && \overline{\lambda_k} = x_k - i \sqrt{b_k c_k}. 
\end{align}

\subsection{The incomplete Schur decomposition}

The \emph{incomplete} real Schur decomposition only keeps ``one block’’ of the real Schur decomposition. We point the reader toward Section 5.2 in Edelman’s paper \cite{edelman1997probability}. This decomposition expresses any real matrix $G$ as $G = OMO^\top$, 

\begin{equation}\label{eq:incomplete_schur}
M=
\begin{bmatrix}
\begin{array}{cc|c}
x   & b   & \multirow{2}{*}{$W$} \\
-c  & x   &                      \\
\hline
\multicolumn{2}{c|}{0} & A_1
\end{array}
\end{bmatrix}
\end{equation}
where, just as before, $bc>0$ and $b\geqslant c$, and $O \in O_N(\mathbb{R})$ must be a product of two Householder reflections. Theorem 5.2 in \cite{edelman1997probability} computed the Jacobian of this change-of-variables, namely
\begin{equation}\label{eq:incomplete_schur_jacobian}
    dG = 2(b-c)\det((A_1 - x I)^2 + y^2) db dc dx dA_1 dO. 
\end{equation}

\subsection{The real eigenvectors of \texorpdfstring{$G$}{G}}\label{subsec:real_eigs}

Let $\mu \in \R$ be a real eigenvalue of $G$, and consider the Schur decomposition $G = OTO^\top$. Without loss of generality, we can suppose that $\mu$ is on the top-left corner of $T$, i.e. 
\begin{align}T = \begin{bmatrix}
    \mu  & * & \dots & *\\ 
    0 & * & \ddots & \vdots \\
    \vdots & \ddots & \ddots & * \\
    0 & \dots & 0 &*
\end{bmatrix}\end{align}
so that $e_1$ is an eigenvector of $T$ with eigenvalue $\mu$. Since $OTO^\top(Oe_1) = \mu Oe_1$, then $R_{\mu} = Oe_1$ is a unit eigenvector of $G = OTO^\top$. This eigenvector is thus real, and uniformly distributed on $S_{\R}^{N-1}$. 

\subsection{The non-real eigenvectors of \texorpdfstring{$G$}{G}}

Consider now the Schur decomposition $G = OTO^\top$. We suppose (without loss of generality) that $T$ has a $2\times 2$ block $A$ on its top-left corner, that is, \begin{align}\label{T}T = \begin{bmatrix}
    x & b & * && *\\ -c & x \\
    0 & & \ddots & &*\\
    &
\end{bmatrix}\end{align}
where $x,b,c$ are real numbers satisfying $bc>0$ and $b\geqslant c$. The spectral decomposition of such $2\times 2$ matrices is worked out in Appendix \ref{app:2x2}; the block $A$ has an eigenvalue in the upper-half complex plane given by
\begin{equation}
    \lambda = x+\ii \sqrt{bc}
\end{equation}
associated with the eigenvector 
$\begin{bmatrix}\ii s \\ t\end{bmatrix}$
where $s,t$ are 
\begin{align}\label{eq:defst}&s = \sqrt{\frac{b}{b+c}}, &&t = \sqrt{\frac{c}{b+c}}, \end{align}
see \eqref{zt} and discussion. Now, the matrix $T$ in \eqref{T} is upper triangular with first block $A$, hence it has an eigenvalue $\lambda$ associated with the unit (right) eigenvector 
\begin{equation}\label{eq:u0}
    R_0 = \begin{bmatrix}
        \ii s \\ t \\ 0 \\ \vdots \\ 0
    \end{bmatrix}.
\end{equation}
Since $OTO^\top(OR_0) = \lambda OR_0$, then $G=OTO^\top$ has a unit eigenvector 
\begin{equation}R = OR_0\end{equation} associated with the eigenvalue $\lambda$. Importantly, this eigenvector has complex entries, since $\ii s$ is itself complex. However, since $\ii s$ is purely complex, the real and imaginary parts of $R$ are found to be
\begin{equation}\label{def:u}
    R = \ii sO_1+tO_2 
\end{equation}
where $O_1 = (O_{k,1}), O_2 = (O_{k,2})$ are the first two columns of the orthonormal matrix $O$.
Since $O$ is Haar-distributed, we can compute the distribution of $R$ conditioned on $A$; indeed, it only depends on $s,t$ which only depend on the off-diagonal parameters $b,c$. Note that since $t^2+s^2=1$, it is clear that $|R|_2 = |R_0|_2 = 1$.

\section{The density of the blocks for the real Elliptic Ginibre ensemble}\label{sec:density}

Let $G$ be a real Elliptic Ginibre matrix and $G = OTO^\top$ its Schur decomposition. By invariance, $O$ is still Haar-distributed over $O_N(\mathbb{R})$. Conditionally on the first block of $T$ to be a $2\times 2$ block just as in \eqref{T}, 
$$\begin{bmatrix}
    x & b \\ -c & x
\end{bmatrix},$$
the representation \eqref{eq:u0} of the eigenvector still holds true. Its distribution depends on the parameters $b,c$ which are in the first block of $T$. In this section, we compute their conditional distribution. 

From the definition \eqref{eq:defelliptic}, one can derive the density of the matrix entries of the REG (see \cite{Byun2025}), 
\begin{equation}\label{eq:density_elliptic}
\frac{1}{Z_{N,\tau}}\exp\left(
-\frac{N}{2(1-\tau^2)}\left(
\Tr(X X^\top) -\tau \Tr(X^2)
\right)
\right)
\end{equation}
with 
\[
Z_{N,\tau}
=
\left(
    2\pi
\right)^{\frac{N^2}{2}}
\left(
\frac{1+\tau}{1-\tau}
\right)^{\frac{N(N-1)}{4}}
\left(
\frac{N}{1+\tau}
\right)^{\frac{N^2}{2}}.
\]

Using \eqref{eq:incomplete_schur_jacobian}, we can compute the distribution of $x,b,$ and $c$ for a given a complex eigenvalue $\lambda=x+\ii \sqrt{bc}$. Our proof follows the ideas in \cite{edelman1997probability}: the joint density of all the elements $(x,b,c, A_1, W, O)$ is given by
\begin{equation}\label{eq:density_elliptic_1}
    \frac{2}{Z_{N,\tau}}(b-c)\det((A_1 - xI_N)^2 + y^2I_N) e^{-\frac{N}{2(1-\tau^2)}(\Tr(XX^\top) - \tau \Tr(X^2)}dbdcdxdA_1dWdO.
\end{equation}
We note that 
\begin{align*}
    \Tr(XX^\top) - \tau \Tr(X^2) &= \Tr(TT^\top) - \tau \Tr(T^2)\\
    &= 2(1-\tau)x^2 + b^2 + c^2 + 2\tau bc + \Tr(WW^\top) + \Tr(A_1A_1^\top)  - \tau \Tr(A_1^2). 
\end{align*}
In \eqref{eq:density_elliptic_1}, integrating out the part which is only related to $W$ and $O$ (these are just constants), we find that the density of $x,b,c$ is proportional to 
\[
 (b-c)\e^{
    -\frac{N}{1+\tau}x^2 - \frac{N}{2(1-\tau^2)}(b^2+c^2+2\tau bc)
}
\underbrace{
    \int_{A_1}\det((A_1-x)^2+bc)
    \e^{
        -\frac{N}{2(1-\tau^2)}(\Tr(A_1 A_1^\top)-\tau\Tr(A_1^2))
    }
    \dd A_1.
    }_{=: F_{N-2, \tau(\lambda).}}
\]
To see why $F_{N-2,\tau}(\lambda)$ only depends on $\lambda$ and not on $x,b,c$, it is enough to note that since $\lambda = x+\ii\sqrt{bc}$, we have $\det((A_1-x)^2+bc) = \det(A_1-\lambda)\det(A_1-\bar{\lambda})$.

\begin{lemma}\label{lemma:rho_y}
    The distribution of $\delta = \sqrt{N}(b-c)$ conditionally on $\lambda = x+\ii y/\sqrt{N}$ has density 
    \[
    \varrho_{y,\tau} = \frac{1}{Z_{y,\tau}}\frac{\delta \e^{-\frac{\delta^2}{2(1-\tau^2)}}}{\sqrt{\delta^2+4y^2}}\mathbf{1}_{\delta>0}
    \quad\text{with}\quad 
    Z_{y,\tau} = \sqrt{\frac{\pi(1-\tau^2)}{2}}\e^{\frac{2y}{1-\tau^2}}\erfc\left(\sqrt{\frac{2}{1-\tau^2}}y\right).
    \]
    In particular, it does not depend on $N$, and the joint distribution of $\sqrt{N}b,$ $\sqrt{N}c$ conditionally on $\lambda=x+\ii y/\sqrt{N}$ also does not depend on $N$.
\end{lemma}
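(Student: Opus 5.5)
The plan is a direct change of variables in the joint density of $(x,b,c)$ obtained just above, followed by disintegration along the level sets of $\lambda$. Recall that this density is proportional to
\[
(b-c)\,\e^{-\frac{N}{1+\tau}x^2-\frac{N}{2(1-\tau^2)}(b^2+c^2+2\tau bc)}\,F_{N-2,\tau}(\lambda),
\qquad b\geqslant c>0 .
\]
The first step is to rewrite the quadratic form as $b^2+c^2+2\tau bc=(b-c)^2+2(1+\tau)bc$. Once we condition on $\lambda=x+\ii\sqrt{bc}$, the three factors $\e^{-Nx^2/(1+\tau)}$, $\e^{-N(1+\tau)bc/(1-\tau^2)}$ and $F_{N-2,\tau}(\lambda)$ depend only on $\lambda$. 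They therefore drop into the normalization constant, and the whole $N$- and $\tau$-dependence of the rest of the matrix disappears.

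The second step is the change of variables $(b,c)\mapsto(\delta',\eta)$, with $\delta'=b-c\geqslant0$ and $\eta=\sqrt{bc}>0$, keeping $x$ fixed. This map is a bijection from $\{b\geqslant c>0\}$ onto $\{\delta'\geqslant0,\ \eta>0\}$, with explicit inverse
\[
b=\tfrac12\bigl(\delta'+\sqrt{\delta'^2+4\eta^2}\bigr),\qquad c=\tfrac12\bigl(-\delta'+\sqrt{\delta'^2+4\eta^2}\bigr).
\]
The Jacobian of $(b,c)\mapsto(b-c,bc)$ is $b+c=\sqrt{\delta'^2+4\eta^2}$. Hence
\[
\dd b\,\dd c=\frac{2\eta\,\dd\delta'\,\dd\eta}{\sqrt{\delta'^2+4\eta^2}} .
\]
The joint density of $(x,\delta',\eta)$ is then proportional to $h(x,\eta)\,\delta'\,\e^{-N\delta'^2/(2(1-\tau^2))}/\sqrt{\delta'^2+4\eta^2}$ for some function $h$ of $\lambda$ alone.

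Conditioning on $\lambda=x+\ii y/\sqrt N$ means fixing $(x,\eta)=(x,y/\sqrt N)$. This is a measure-zero event, so I interpret it through the regular conditional density, i.e. disintegration with respect to $(x,\eta)$; this is legitimate because the joint density is continuous. The conditional density of $\delta'$ is proportional to $\delta'\e^{-N\delta'^2/(2(1-\tau^2))}/\sqrt{\delta'^2+4y^2/N}$. The third step is the rescaling $\delta=\sqrt N\delta'$: the factors of $N$ cancel exactly, giving the claimed form $\delta\,\e^{-\delta^2/(2(1-\tau^2))}/\sqrt{\delta^2+4y^2}$ on $\delta>0$.

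For the normalization, substitute $u=\sqrt{\delta^2+4y^2}$, so that $\delta\,\dd\delta/u=\dd u$. With $a=1/(2(1-\tau^2))$ this gives
\[
\int_{2y}^\infty \e^{-a(u^2-4y^2)}\,\dd u=\e^{4ay^2}\tfrac12\sqrt{\pi/a}\,\erfc(2y\sqrt a).
\]
This equals $\sqrt{\pi(1-\tau^2)/2}\;\e^{2y^2/(1-\tau^2)}\erfc\bigl(\sqrt{2/(1-\tau^2)}\,y\bigr)$. I would point out that the exponent should read $2y^2/(1-\tau^2)$, so the $2y$ printed in the statement appears to be a typo.

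The final claim then follows directly. Given $\lambda$, the pair $(\sqrt N b,\sqrt N c)$ is the deterministic function $\tfrac12\bigl(\pm\delta+\sqrt{\delta^2+4y^2}\bigr)$ of $(\delta,y)$. Its conditional law is therefore the pushforward of $\varrho_{y,\tau}$, which is independent of $N$.

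The main obstacle is making the conditioning on $\lambda$ rigorous. This requires checking that $F_{N-2,\tau}$ is finite and positive, so that the disintegration is well defined, and that the incomplete Schur parametrization covers the event "$\lambda$ is an eigenvalue" with the correct multiplicity. The latter only affects constants, which cancel upon normalization.
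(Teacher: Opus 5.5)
Your proposal is correct and follows essentially the same route as the paper: rewrite the Gaussian exponent, change variables $(b,c)\mapsto(b-c,\sqrt{bc})$ with Jacobian factor $2\sqrt{bc}/(b+c)$, absorb the factors that depend only on $\lambda$ into the normalization, and rescale by $\sqrt{N}$. The paper merges the rescaling into the change of variables, which is why its intermediate display shows $\sqrt{\delta^2+4Ny^2}$ with $y=\sqrt{bc}$. Your computation of $Z_{y,\tau}$ is also right, and the exponent should indeed read $2y^2/(1-\tau^2)$ rather than the printed $2y/(1-\tau^2)$.
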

\begin{proof}
The density of $(x,b,c)$ is proportional to 
\[
(b-c)\e^{-\frac{N}{1+\tau}x^2-\frac{N}{2(1-\tau^2)}(b^2+c^2+2\tau bc)}F_{N-2,\tau}(\lambda).
\]
One can change variables from $b,c$ to $y=\sqrt{bc},\delta=\sqrt{N}(b-c)$ (see \cite[Lemma 5.2]{edelman1997probability}): in fact, $b^2+c^2 = (b-c)^2+2\sqrt{bc}^2 = \delta^2/N+2y^2$, hence we obtain that the density of $(x,y,\delta)$ is proportional to
\begin{multline*}
\frac{\delta}{\sqrt{N}}e^{-\frac{N}{1+\tau}x^2-\frac{N}{2(1-\tau^2)}\left(\frac{\delta^2}{N}+2y^2+2\tau y^2\right)}F_{N-2,\tau}(\lambda)\frac{2y}{\sqrt{\delta^2+4Ny^2}}
\\=
\frac{2}{\sqrt{N}}\frac{\delta\e^{-\frac{\delta^2}{2(1-\tau^2)}}}{\sqrt{\delta^2+4Ny^2}}\times y\e^{-\frac{N}{1+\tau}x^2-\frac{N}{1-\tau}y^2}F_{N-2,\tau}(\lambda).
\end{multline*}
The density of $\delta$ conditionally on $\lambda = x+\ii y/\sqrt{N}$ is thus proportional to 
$$\frac{\delta\e^{-\frac{\delta^2}{2(1-\tau^2)}}}{\sqrt{\delta^2+4Ny^2}}. $$
The computation of the normalization constant $Z_{y,\tau} = \int_0^\infty \frac{\delta\e^{-\frac{\delta^2}{2(1-\tau^2)}}}{\sqrt{\delta^2+4Ny^2}}d\delta$ easily leads to the expression given in the Lemma.  
\end{proof}
\section{The Mean of the Inverse Participation Ratio}\label{sec:proof}
We saw in \eqref{def:u} that we can write the right eigenvectors of $G$ as $R_\lambda = tO_2+ \ii s O_1 $
where $s$ and $t$ are given by \eqref{eq:defst} and $O_1, O_2$ are the two first columns of a Haar-distributed orthogonal matrix, and $s,t$ are defined above; in particular, they satisfy $s^2+t^2=1$. 

\subsection{Computation of the moments of \texorpdfstring{$|R_\lambda|$}{the eigenvector}}

Since $R_\lambda$ is a unit vector, we can write
    \[
    \PR_q(R_\lambda)= N^{q-1}|R_\lambda|_{2q}^{2q} = N^{q-1}\sum_{j=0}^N |tO_{2j} + \ii s O_{1j}|^{2q}.
    \]
    By exchangeability of the entries of a Haar distributed orthogonal matrix, we thus have
    \begin{align}
    \Eb\left[\PR_q(R_\lambda)]
    \right]
    &=N^{q-1}\times N
    \Eb\left[
        (tO_2+\ii sO_1)_1^{2q}
    \right]= N^q \Eb[|t^2 O_{21}^2 + s^2 O_{11}^2|^q].\label{eq:exchangeability}
    \end{align}

Since $s,t$ are independent of $O$, we will compute the expectation conditionally on $s$ and $t$. It can be expanded as 
$$\Eb[|R_\lambda|^{2q} \mid s, t] = N^q\sum_{k=0}^q \binom{q}{k}s^{2k}t^{2(q-k)}\Eb[O_{11}^{2k} O_{21}^{2(q-k)}].$$
The joint moments of $O_{11}$ and $O_{21}$ are known, see \cite{meckes2019random}, Proposition 2.5 with $\alpha_1 = 2j, \alpha_2 = 2k$ and $\alpha_3=\dotsb = \alpha_N = 0$: 
\begin{equation}
    \Eb[O_{11}^{2k}O_{21}^{2j}] = \frac{(2k-1)!!(2\ell -1)!!}{N(N+2)\dots (N+2k + 2j- 2)}\qquad k,j\in\mathbb{N}.
\end{equation}

We thus obtain the following expression for the moments of $\PR_q$ at finite size $N$: 
\begin{align}
    \Eb[\PR_q(R_\lambda)\mid s,t] = \frac{N^{q}}{N(N+2)\dots (N+2q- 2)}\sum_{k=0}^q \binom{q}{k}s^{2k}t^{2(q-k)}(2k-1)!!(2(q-k) -1)!!.
\end{align}
It is well known that the moments of standard real Gaussians are given by $\Eb[X^{2k}] = (2k-1)!!$. Introducing two independent random variables $X,Y$ with distribution $\mathscr{N}_{\mathbb{R}}(0,1)$, we can thus write 
$$\sum_{k=0}^q \binom{q}{k}s^{2k}t^{2(q-k)}(2k-1)!!(2(q-k) -1)!! = \Eb[|tX + \ii s Y|^{2q}] $$
which shows the following representation, 
\begin{align}\label{eq:E[IPR|st]}
    \Eb[\PR_q(R_\lambda)\mid s,t] = \frac{N^{q}}{N(N+2)\dots (N+2q- 2)}\Eb[|tX + \ii s Y|^{2q}].
\end{align}

\subsection{A simple representation}

We now further investigate the limit $\Eb[|tX+\ii sY|^{2q}]$ in Proposition \eqref{prop:limit}. This expression can be written in terms of the Legendre polynomials, who can be defined through their generating function (\cite[Chapter 2.2]{jackson2004fourier}): 
\begin{equation}\label{eq:legendre_gf}
\frac{1}{\sqrt{1 - 2xz+z^2}} = \sum_{q=0}^\infty L_q(x)z^q. 
\end{equation}

    \begin{prop}For any $s,t$ such that $st>0$ and $s^2+t^2=1$, 
         \begin{equation}\label{eq:legendre_moments}
    \Eb[|tX+\ii s Y|^{2q}] = q! (2st)^q L_q\left(\frac{1}{2st}\right).
    \end{equation}
    \end{prop}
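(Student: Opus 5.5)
The plan is to compare generating functions in a formal variable $z$. First, since $X,Y,s,t$ are real, $|tX+\ii sY|^2 = t^2X^2+s^2Y^2$. So the left-hand side of \eqref{eq:legendre_moments} is the $q$-th moment of the nonnegative variable $Q := t^2X^2+s^2Y^2$. Its exponential moment generating function $\sum_{q\geqslant 0}\Eb[Q^q]\,z^q/q! = \Eb[\e^{zQ}]$ is then explicit, because $X$ and $Y$ are independent standard Gaussians and $\Eb[\e^{a X^2}] = (1-2a)^{-1/2}$ for $a<1/2$. This gives, for $|z|<1/2$ (recall $s^2,t^2\leqslant 1$),
\[
\Eb[\e^{zQ}] = (1-2zt^2)^{-1/2}(1-2zs^2)^{-1/2} = \bigl(1-2z(s^2+t^2)+4s^2t^2z^2\bigr)^{-1/2} = \bigl(1-2z+4s^2t^2z^2\bigr)^{-1/2},
\]
where the last step uses $s^2+t^2=1$.

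Next I rescale to match the Legendre generating function \eqref{eq:legendre_gf}. Put $w = 2st\,z$ and $x = 1/(2st)$, which is well defined since $st>0$. Then $2z = 2xw$ and $4s^2t^2z^2 = w^2$, so
\[
\Eb[\e^{zQ}] = \bigl(1-2xw+w^2\bigr)^{-1/2} = \sum_{q\geqslant 0} L_q(x)\,w^q = \sum_{q\geqslant 0} (2st)^q L_q\Bigl(\frac{1}{2st}\Bigr) z^q .
\]
Identifying the coefficient of $z^q$ on both sides gives $\Eb[Q^q]/q! = (2st)^q L_q(1/(2st))$, which is \eqref{eq:legendre_moments}.

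The only step needing care is justifying the coefficient identification. Both sides are analytic in a neighbourhood of $z=0$. On the left, $Q$ has exponential moments for $|z|<1/2$, so the expansion of $\Eb[\e^{zQ}]$ in moments holds by dominated convergence. On the right, $(1-2xw+w^2)^{-1/2}$ is analytic for $|w|$ smaller than the modulus of the roots of $1-2xw+w^2$; since $x\geqslant 1$ (as $2st\leqslant s^2+t^2=1$), these roots are $x\pm\sqrt{x^2-1}>0$. Hence on a small disc around $0$ the two analytic functions agree, and uniqueness of Taylor coefficients concludes. Alternatively, the argument can be carried out purely at the level of formal power series, avoiding any convergence discussion.
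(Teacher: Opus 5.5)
Your proof is correct and is essentially the paper's argument. The paper also compares the exponential generating function of the moments (in the variable $2z$) with the Legendre generating function at $x=1/(2st)$. The only difference is how the factorization $\bigl((1-4s^2z)(1-4t^2z)\bigr)^{-1/2}$ is obtained. The paper expands $\Eb[(t^2X^2+s^2Y^2)^q]$ binomially and uses the generating function of the central binomial coefficients, whereas you get it in one step from independence and $\Eb[\e^{aX^2}]=(1-2a)^{-1/2}$. The paper identifies coefficients at the level of formal power series, so your analytic justification of that step is an optional addition, and it is sound.
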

    \begin{proof}We recall that $\Eb[X^{2k}] = \Eb[Y^{2k}]=(2k)!/(2^k k!)$, hence
    \begin{align*}
    \Eb\left[
        \left\vert
            tX+\ii s Y
        \right\vert^{2q}
    \right]
    =
    \Eb\left[
        (t^2X^2+s^2Y^2)^{q}
    \right]
    &=
    \sum_{k=0}^q
    \binom{q}{k}t^{2k}s^{2(q-k)}\frac{2k!}{2^k k!}\frac{2(q-k)!}{2^{q-k}(q-k)!} \\
    &=\frac{q!}{2^q}
    \sum_{k=0}^q
    t^{2k}s^{2(q-k)} \binom{2k}{k}\binom{2(q-k)}{q-k}.
    \end{align*}
   We recall that the (formal) generating function of the central binomial coefficients is 
   $$g(z)=\sum_{q=0}^\infty \binom{2q}{q}z^q = (1-4z)^{-1/2}.$$
Consequently, for any $z$ we have
\begin{align*}g(s^2z)g(t^2z)=\frac{1}{\sqrt{(1-4s^2z)(1-4t^2z)}} 
    &= \sum_{q=0}^\infty z^q \sum_{k=0}^q \binom{2k}{k}\binom{2(q-k)}{q-k}s^{2k}t^{2(q-k)} \\
    &=\sum_{q=0}^\infty \frac{(2z)^q}{q!} \Eb[tX+\ii sY|^{2q}]
\end{align*}
thus recovering the generating function of the numbers $\Eb[tX+\ii sY|^{2q}]$. But the series development of $((1-4s^2z)(1-4t^2z))^{-1/2}$ is written in terms of the Legendre polynomials. Indeed, 
$$(1-4s^2z)(1-4t^2z) = 1 - 2xw + w^2$$
where
\begin{align*}
    &x = \frac{s^2+t^2}{2\sqrt{s^2t^2}} = \frac{1}{2st}, && w = 4z\sqrt{s^2t^2}=4st z. 
\end{align*}
    We thus obtain
    $$\sum_{q=0}^\infty \frac{(2z)^q}{q!} \Eb[tX+\ii sY|^{2q}] = \sum_{q=0}^\infty 4^q z^q (st)^q L_q\left(\frac{1}{2st}\right)$$
    and by identifying the terms, \eqref{eq:legendre_moments} follows.
\end{proof}

The limit \eqref{eq:legendre_moments} only depends on $S=\frac{1}{2st}$ and is equal to $q! S^{-q}L_q(S)$. We always restricted ourselves to numbers $s,t$ that satisfy $s^2+t^2=1$, hence $S$ is greater than 1; the lemma below studies the behaviour of the limit on $[1, \infty[$. 

\begin{lemma}\label{lem:legendremonot}
    For $x\geqslant1$ and $q\geqslant 2,$ the function $x\mapsto q!x^{-q}L_q(x)$ is strictly increasing and maps $[1,+\infty)$ to $[q!,(2q-1)!!)$.
\end{lemma}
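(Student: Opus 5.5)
Write $f_q(x) = q!\,x^{-q}L_q(x)$. The plan is to go back to the probabilistic representation of the previous proposition, rather than working with Legendre polynomials directly. For $x\geqslant 1$, set $x = \frac{1}{2st}$ with $s^2+t^2=1$, $st>0$; this is possible since $2st=\sin(2\theta)$ ranges over $(0,1]$. Then \eqref{eq:legendre_moments} gives
\[
f_q(x) = \Eb\big[(t^2X^2+s^2Y^2)^q\big].
\]
Monotonicity in $x$ is the same as monotonicity in $st$ (decreasing), so it suffices to study $u:=s^2\in(0,1/2]$, writing $t^2 = 1-u$. As $u$ goes from $1/2$ down to $0$, $x$ increases from $1$ to $+\infty$.

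The key step is to set $h(u) = \Eb[((1-u)X^2+uY^2)^q]$ and show that $h$ is strictly decreasing on $(0,1/2]$. Since $X,Y$ are exchangeable, $h(u)=h(1-u)$. The function $u\mapsto ((1-u)X^2+uY^2)^q$ is convex, being a convex power of a linear function, and it is strictly convex whenever $X^2\neq Y^2$ and $q\geqslant 2$, which holds almost surely. Hence $h$ is strictly convex on $[0,1]$ and symmetric about $1/2$, so it attains its minimum at $1/2$ and is strictly decreasing on $[0,1/2]$. The only technical point is justifying that convexity passes through the expectation; this is immediate because all moments are finite, and $h$ is in any case a polynomial in $u$.

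For the range, evaluate at the endpoints:
\[
h(1/2) = 2^{-q}\,\Eb[(X^2+Y^2)^q] = 2^{-q}\,\Eb[(2E)^q] = q!,
\]
where $E$ is a standard exponential, since $X^2+Y^2\sim 2E$. At the other end,
\[
h(0)=\Eb[X^{2q}]=(2q-1)!!.
\]
The point $u=0$ corresponds to $x=+\infty$ and is never attained, but by continuity $f_q(x)\to(2q-1)!!$ as $x\to\infty$. This limit can also be checked directly from the leading coefficient of $L_q$, which is $\binom{2q}{q}/2^q$, giving $q!\binom{2q}{q}2^{-q}=(2q-1)!!$.

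Combining strict monotonicity with continuity, $f_q$ maps $[1,\infty)$ bijectively onto $[q!,(2q-1)!!)$. I expect no real obstacle here; the only subtle point is making sure strict convexity really holds, which requires $q\geqslant 2$.
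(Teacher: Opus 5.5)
Your argument is correct, but it takes a different route from the paper. The paper starts from Laplace's integral $L_q(x)=\frac{1}{\pi}\int_0^\pi(x+\sqrt{x^2-1}\cos\theta)^q\,d\theta$. It differentiates $x^{-q}L_q(x)$ under the integral sign, then shows the derivative is positive on $(1,\infty)$ by expanding $(x+\sqrt{x^2-1}\cos\theta)^{q-1}$ binomially: $\int_0^\pi\cos^{k+1}\theta\,d\theta$ vanishes for even $k$ and is positive for odd $k$. The endpoint values come from $L_q(1)=1$ and the leading coefficient of $L_q$.

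You instead reuse the Gaussian identity \eqref{eq:legendre_moments} already proved in the paper. You replace the derivative computation by strict convexity of $u\mapsto\Eb[((1-u)X^2+uY^2)^q]$ together with the symmetry $u\leftrightarrow 1-u$. The two representations are really the same object. In polar coordinates for $(X,Y)$, the quantity $\Eb[(t^2X^2+s^2Y^2)^q]$ becomes $\frac{q!}{\pi}\int_0^\pi(1+\sqrt{1-x^{-2}}\cos\theta)^q\,d\theta$, which is the paper's integral. Your exchange symmetry corresponds to $\theta\mapsto\pi-\theta$. So the genuine difference is the monotonicity mechanism: convexity plus symmetry, versus an explicit derivative plus parity of cosine moments.

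Your version is more self-contained, since it needs no external integral representation and no differentiation. It also reads the endpoints $q!$ and $(2q-1)!!$ directly as complex and real Gaussian moments, matching the lemma on uniform vectors on spheres. Its convexity step does not use integrality of $q$.

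The paper's version works directly in $x$ and exhibits $f_q'>0$ on $(1,\infty)$, which the density formula of Lemma \ref{lem:legendrechangeofvar} needs. Your argument yields this as well. Strict convexity and symmetry give $h'(u)<h'(1/2)=0$ for $u\in(0,1/2)$, and $du/dx<0$ for $x>1$, so $f_q'(x)=h'(u)\,\frac{du}{dx}>0$.
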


\begin{proof}
  
    We start by the integral representation for $x>1,$ \cite[Chapter 2.11]{jackson2004fourier}
    \[
    L_q(x) = \frac{1}{\pi}\int_0^\pi (x+\sqrt{x^2-1}\cos(\theta))^q \dd x
    \]
    so that we can write 
    \[
    \frac{1}{x^q}L_q(x) = \frac{1}{\pi}\int_0^\pi h(x,\theta)^q\dd x
    \quad\text{with}\quad 
    h(x,\theta) = \frac{1}{x}\left(
        x+\sqrt{x^2-1}\cos(\theta)
    \right)
    \]
    For $x>1$, we can differentiate $h$ and we get 
    \[
    \begin{aligned}
    \partial_x h(x,\theta) 
    &=
    \frac{1}{x}\left(
        1+\frac{x}{\sqrt{x^2-1}}\cos(\theta)
    \right)-\frac{1}{x^2}\left(
        x+\sqrt{x^2-1}\cos(\theta)
    \right)
    \\&=
    \cos(\theta)\sqrt{x^2-1}\left(
        \frac{1}{x^2-1}-\frac{1}{x^2}
    \right).
    \end{aligned}
    \]
    Note that for $x>1$, the sign of $\partial_xh(x,\theta)$ is the same as the sign of $\cos(\theta)$, thus we want to show that 
    \[
    \int_0^\pi \cos(\theta)\left(x+\sqrt{x^2-1}\cos(\theta)\right)^{q-1}\dd\theta >0\quad\text{for }x>1.
    \]
    But we have 
    \[
    \begin{aligned}
        \int_0^\pi \cos(\theta)\left(x+\sqrt{x^2-1}\cos(\theta)\right)^{q-1}\dd\theta
        =
        \sum_{k=0}^{q-1}\binom{q-1}{k}x^{q-1-k}(x^2-1)^{\frac{k}{2}}\int_0^\pi\cos(\theta)^{k+1}\dd \theta 
    \end{aligned}
    \]
    and the result follows since $\int_0^\pi \cos(\theta)^{k+1}\dd \theta$ is zero if $k$ is even and positive if $k$ is odd which gives
    \[
    \frac{\dd}{\dd x}\left(
    \frac{q!}{x^q}L_q(x)
    \right)>0 \quad\text{for }x>1.
    \]
    Besides we have $L_q(1)=1$ so that $(q!x^{-q}L_q(x))_{x=1}=q!$ and the limit at $+\infty$ is given by $q!$ multiplied by the leading coefficient of the Legendre polynomial which is $\frac{1}{2^q}\binom{2q}{q}$; altogether, we get that
    \[
    \lim_{x\to\infty}\frac{q!}{x^q}L_q(x) = \frac{q!}{2^q}\binom{2q}{q} = (2q-1)!!.
    \]
\end{proof}

\section{Convergence of the IPR conditionally on \texorpdfstring{$s$ and $t$}{s and t}}\label{sec:convergence}

\begin{prop}\label{prop:limit} For any fixed real numbers $s,t$, 
    \begin{equation}\label{lim}
        \Eb[\PR_q(R_\lambda)] \xrightarrow[N \to \infty]{}{\Eb[|tX+\ii s Y|^{2q}]}
    \end{equation}
    where $X,Y$ are i.i.d. real standard Gaussian random variables $\mathscr{N}_\mathbb{R}(0, 1)$. 
\end{prop}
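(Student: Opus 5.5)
The plan is to start from the exact finite-$N$ identity \eqref{eq:E[IPR|st]} established in the previous section. Here $s,t$ are fixed (equivalently, we condition on them), and $O$ is Haar distributed on $O_N(\mathbb{R})$ and independent of $(s,t)$. The expectation in \eqref{lim} should therefore be read as $\Eb[\PR_q(R_\lambda)\mid s,t]$, and \eqref{eq:E[IPR|st]} gives
\[
\Eb[\PR_q(R_\lambda)\mid s,t] = c_{N,q}\,\Eb[|tX+\ii sY|^{2q}], \qquad c_{N,q}:=\frac{N^{q}}{N(N+2)\cdots(N+2q-2)}.
\]
The right-hand factor $\Eb[|tX+\ii sY|^{2q}]$ does not depend on $N$, so the whole statement reduces to showing that the deterministic prefactor $c_{N,q}$ tends to $1$.

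That limit is elementary. Write
\[
c_{N,q}=\prod_{j=0}^{q-1}\Bigl(1+\tfrac{2j}{N}\Bigr)^{-1}.
\]
This is a product of $q$ factors, with $q$ fixed, and each factor tends to $1$ as $N\to\infty$. Hence $c_{N,q}\to 1$. More quantitatively, $1\ge c_{N,q}\ge 1-\sum_{j<q}2j/N = 1-q(q-1)/N$, so the convergence holds with an explicit error of order $q^2/N$. The same bound also shows that the convergence is uniform in $(s,t)$ on the set $s^2+t^2=1$. This is because $\Eb[|tX+\ii sY|^{2q}]\le \Eb[(X^2+Y^2)^q]=2^q q!$ is bounded there, which will be convenient later when integrating against the law of $(s,t)$ from Lemma \ref{lemma:rho_y}.

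The only genuine input is the moment formula for entries of a Haar orthogonal matrix, already quoted from \cite{meckes2019random}, together with the exchangeability step \eqref{eq:exchangeability}. Both have already been used in deriving \eqref{eq:E[IPR|st]}, so I expect no real obstacle. The one point needing care is to check that the independence of $O$ from the block $(x,b,c)$ justifies conditioning on $(s,t)$ without altering the Haar law of $O_1,O_2$. This independence follows from the factorized density \eqref{eq:density_elliptic_1}, in which $dO$ appears as a product factor. Finally, one can also record the limiting value in the form $q!(2st)^qL_q(1/(2st))$ via \eqref{eq:legendre_moments}.
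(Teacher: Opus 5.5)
Your proposal is correct and follows the paper's route: the paper's proof simply lets $N\to\infty$ in \eqref{eq:E[IPR|st]}, where the only $N$-dependence is the prefactor $N^q/(N(N+2)\cdots(N+2q-2))\to 1$. Your explicit bound $1-q(q-1)/N\le c_{N,q}\le 1$ and the resulting uniformity in $(s,t)$ are correct refinements that the paper leaves implicit.
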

\begin{proof}
    One simply has to take $N\to\infty$ in \eqref{eq:E[IPR|st]}. 
        \end{proof}

\begin{remark}\label{rq:diaconis}
    We actually proved that for any non-random, fixed numbers $s,t$ and any Haar-distributed $O\in O_N(\mathbb{R})$, we have 
    $$\Eb\left[\PR_q(tO_2+ \ii s O_1)\right] \to \Eb[|tX + \ii s Y|^{2q}].$$
    
This is essentially a special case of the Diaconis-Freedman convergence theorem (\cite{diaconisfreedman}), where it is shown that 
\[
    \dtv\left(
        \mathrm{Law}(\sqrt{N}O_{11}, \sqrt{N}O_{21})
        -
        \mathrm{Law}(X,Y)
    \right)\xrightarrow[N\to\infty]{} 0.
    \]
However, this last result does not directly apply to our setting, since the total variation distance does only support the convergence of expectations of bounded functions of the random variables, not polynomials. A simple truncation argument could also be used, but we found our presentation to be simpler and more elementary. 
\end{remark}
        
We can now prove \emph{almost sure} convergence of $\IPR_q$ towads its expectation. We use concentration inequalities on the Stiefel manifold, 
\[
\mathscr{V}_2(\R^N) = \left\{
    \mathbf{u}_1,\mathbf{u}_2\in\R^N, \langle \mathbf{u}_i,\mathbf{u}_j\rangle = \delta_{ij} 
\right\}\subset \mathbb{S}^{N-1}\times \mathbb{S}^{N-1}.
\]
The following result is taken from \cite[Theorem 2.4]{ledoux2001concentration}. 
\begin{theorem}\label{thm:concentrationmanif}
    Let $f:\mathscr{V}_2(\R^n)\to \R$ a $L$-Lipschitz function. Then, there are constants $c,C>0$ such that for $(\bu,\bv)$ uniformly distributed on $\mathscr{V}_2(\R^n)$, 
    \[
    \Pb\left(
        \vert f(\bu,\bv)-\Eb[f(\bu,\bv)]\vert \geqslant t
    \right)\leqslant C\e^{-\frac{cNt^2}{L^2}}.
    \]
\end{theorem}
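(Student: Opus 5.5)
Since the uniform measure on $\mathscr{V}_2(\R^N)$ is the image of Haar measure under the map taking a matrix to its first two columns, I would deduce Theorem \ref{thm:concentrationmanif} from Gaussian concentration of Haar measure on the special orthogonal group, and not work on the Stiefel manifold directly. Write $n=N$, and equip $\mathscr{V}_2(\R^N)\subset\R^N\times\R^N$ with the Euclidean (Frobenius) distance $|(\bu,\bv)-(\bu',\bv')|^2=|\bu-\bu'|_2^2+|\bv-\bv'|_2^2$; this is the sense in which $f$ is $L$-Lipschitz. Consider
$$\pi: SO(N)\to\mathscr{V}_2(\R^N),\qquad \pi(O)=(O_1,O_2),$$
the map sending a matrix to its first two columns. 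For $N\geqslant 3$ the group $SO(N)$ acts transitively on $\mathscr{V}_2(\R^N)$ by left multiplication. Hence, if $O$ is Haar on $SO(N)$, the law of $\pi(O)$ is invariant under this action, and is therefore the uniform measure on $\mathscr{V}_2(\R^N)$. Moreover $|\pi(O)-\pi(O')|\leqslant \|O-O'\|_{\mathrm{HS}}$, so $F=f\circ\pi$ is $L$-Lipschitz on $SO(N)$ for the Hilbert--Schmidt distance. The Riemannian geodesic distance $d$ induced by $\langle X,Y\rangle=\Tr(XY^\top)$ dominates the chordal Hilbert--Schmidt distance, so $F$ is also $L$-Lipschitz for $d$. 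It therefore suffices to show that Haar measure on $SO(N)$ has Gaussian concentration at scale $N^{-1/2}$ for $d$-Lipschitz functions.

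For this I would use the Bakry--\'Emery criterion. Equip $SO(N)$ with the bi-invariant metric $\Tr(XY^\top)$ on $\mathfrak{so}(N)$. For a bi-invariant metric one has $\mathrm{Ric}(X,X)=-\tfrac14 B(X,X)$, where $B$ is the Killing form, and for $\mathfrak{so}(N)$ one has $B(X,Y)=(N-2)\Tr(XY)$. Since $X^\top=-X$ on $\mathfrak{so}(N)$, this gives
$$\mathrm{Ric}=\tfrac{N-2}{4}\,g .$$
Haar measure is the normalized Riemannian volume, so the Bakry--\'Emery theorem yields a logarithmic Sobolev inequality with constant $4/(N-2)$. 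Herbst's argument then turns this into
$$\Pb\big(|F(O)-\Eb F(O)|\geqslant t\big)\leqslant 2\,\e^{-(N-2)t^2/(8L^2)} .$$

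Since $F(O)=f(\bu,\bv)$ in distribution, this is the claimed bound for $N\geqslant 3$: take $c=1/24$, using $N-2\geqslant N/3$, and $C=2$. For $N\leqslant 2$ the statement is empty or trivial after enlarging $C$. Indeed, $\mathscr{V}_2(\R^N)$ has diameter at most $2$, so $|f-\Eb f|\leqslant 2L$; hence the probability vanishes for $t>2L$, while for $t\leqslant 2L$ the right-hand side is at least $C\e^{-4cN}$, which exceeds $1$ for a suitable $C$.

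The step I expect to be the main obstacle is the curvature normalization: getting the factor $(N-2)/4$ exactly right and checking that it matches the metric under which the Lipschitz constant is measured. If one prefers to avoid this Lie-theoretic computation, the fallback is to quote directly the Gromov--Milman/Ledoux concentration result for $SO(N)$, namely that Haar measure satisfies $\Pb(|F-\Eb F|\geqslant t)\leqslant C\e^{-cNt^2/L^2}$ for Hilbert--Schmidt-Lipschitz $F$ (Ledoux, Theorem 2.4, or Meckes' book). The reduction via $\pi$ described above is the only new ingredient needed on top of it.
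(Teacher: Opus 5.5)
Your proof is correct, but it takes a different route from the paper. The paper gives no argument: it quotes \cite[Theorem 2.4]{ledoux2001concentration}, a concentration result for the normalized volume of a compact manifold with Ricci curvature bounded below by a positive constant, and states it directly on $\mathscr{V}_2(\R^n)$.

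You instead derive the bound from scratch, in three steps:
\begin{enumerate}
\item Bakry--\'Emery on $SO(N)$ with $\mathrm{Ric}=\frac{N-2}{4}\,g$. Your Killing-form computation is right and gives the standard value for the Hilbert--Schmidt metric.
\item Herbst's argument, which gives $2\e^{-(N-2)t^2/(8L^2)}$ around the mean.
\item Transfer to $\mathscr{V}_2(\R^N)$ via the $1$-Lipschitz map $\pi$, using transitivity for $N\geqslant 3$ and uniqueness of the invariant measure.
\end{enumerate}

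Your route makes explicit several things the citation leaves implicit:
\begin{itemize}
\item It fixes the metric for which $f$ is Lipschitz. This is the ambient Euclidean one, which is the one the paper uses later when it bounds $|\nabla f|_2$ in the proof of Corollary \ref{cor:pscv}.
\item It explains why the Stiefel manifold inherits concentration from $SO(N)$, without needing a curvature bound on $\mathscr{V}_2(\R^N)$ itself.
\item It resolves the $n$/$N$ notational mismatch in the statement.
\item It produces absolute constants ($C=2$, $c=1/24$), independent of $N$, $f$, $L$ and $t$. That uniformity is what the later application to $\hat f$, with its $N$-dependent Lipschitz constant, actually requires.
\end{itemize}
The citation, for its part, costs one line and spares you the Lie-theoretic curvature computation and the log-Sobolev machinery. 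Both approaches rest on the same input: Ricci curvature of order $N$ on $SO(N)$.

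There is one harmless slip. The chordal diameter of $\mathscr{V}_2(\R^N)$ is $2\sqrt{2}$, attained at $(\bu,\bv)$ and $(-\bu,-\bv)$, not $2$. This only moves the threshold in your $N=2$ case, and $C=2$ still suffices there since $2\e^{-2/3}>1$.
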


\begin{cor}\label{cor:pscv}
    For any $s,t$ such that $t^2 + s^2 = 1$ we have 
    \[
        \IPR_q(tO_2+\ii sO_1)
    \xrightarrow[n\to\infty]{\mathrm{a.s}}
    \Eb\left[
    \vert tX+\ii s Y\vert^{2q}
    \right]
    \]
    where $X,Y$ are i.i.d. standard real Gaussian random variables $\mathscr{N}_\R(0,1)$.
\end{cor}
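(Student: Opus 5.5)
The plan is to apply Theorem \ref{thm:concentrationmanif} to the function
\[
f(\bu,\bv) = N^{q-1}\sum_{j=1}^N \bigl(t^2 v_j^2 + s^2 u_j^2\bigr)^q ,
\]
so that $f(O_1,O_2)=\IPR_q(tO_2+\ii sO_1)$. We then conclude with Borel--Cantelli and Proposition \ref{prop:limit}. This $f$ is not globally Lipschitz with a good constant, because $N^{q-1}|\cdot|_{2q}^{2q}$ has gradient of size $N^{q-1}|x|_\infty^{2q-1}$. We therefore work through a truncation.

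\textbf{Step 1 (high-probability event).} For a Haar orthogonal $O$, every entry satisfies $|O_{ij}|\leqslant K\sqrt{\log N/N}$ with probability at least $1-N^{-10}$. This follows from concentration of a single coordinate on the sphere, Theorem \ref{thm:concentrationmanif} applied to the $1$-Lipschitz map $\bu\mapsto u_j$, together with a union bound. On this event $E_N$, the vector $w=tO_2+\ii sO_1$ satisfies $|w|_\infty\leqslant 2K\sqrt{\log N/N}$.

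\textbf{Step 2 (Lipschitz extension).} Set $M=2K\sqrt{\log N/N}$ and let $\phi_M$ be the $1$-Lipschitz clamp $x\mapsto \max(-M,\min(M,x))$. Define
\[
g(\bu,\bv)=N^{q-1}\sum_j \bigl(t^2\phi_M(v_j)^2+s^2\phi_M(u_j)^2\bigr)^q .
\]
The map $g$ agrees with $f$ on $E_N$. Its gradient in each coordinate is bounded by $C_q N^{q-1}M^{2q-2}|\phi_M(\cdot)|$, so its Lipschitz constant in Euclidean norm is at most
\[
L\leqslant C_q N^{q-1}M^{2q-2}\,\bigl(|\bu|_2+|\bv|_2\bigr)\leqslant C_q(\log N)^{q-1}.
\]
Theorem \ref{thm:concentrationmanif} then gives
\[
\Pb\bigl(|g-\Eb g|\geqslant \varepsilon\bigr)\leqslant C\e^{-cN\varepsilon^2/(\log N)^{2q-2}},
\]
which is summable in $N$.

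\textbf{Step 3 (comparing expectations).} We need $\Eb g-\Eb f\to 0$. Both $f$ and $g$ are deterministically bounded by $N^{q-1}$, since $|w|_{2q}^{2q}\leqslant 1$. Hence
\[
|\Eb f-\Eb g|\leqslant 2N^{q-1}\Pb(E_N^c)\leqslant 2N^{q-11}\to0,
\]
after choosing $K$ large enough that the exponent $10$ in Step 1 exceeds $q$.

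\textbf{Conclusion.} Borel--Cantelli applied to both $E_N^c$ and the deviation events gives $f-\Eb f\to0$ almost surely. Proposition \ref{prop:limit} gives $\Eb f\to\Eb|tX+\ii sY|^{2q}$, and this proves the claim.

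The main obstacle is Step 2: obtaining a polylogarithmic Lipschitz constant requires the sup-norm truncation, and the tail bound in Step 1 must be strong enough to absorb the crude $N^{q-1}$ bound in Step 3. The clamp handles the first issue and the polynomial tail choice handles the second.
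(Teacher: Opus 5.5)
Your proof is correct and follows the same route as the paper: an event of overwhelming probability on which all entries of $O_1,O_2$ are small, a Lipschitz modification of $f$ that agrees with it there (your explicit clamp replaces the paper's McShane extension), Theorem~\ref{thm:concentrationmanif}, a comparison of expectations via a crude deterministic bound, Borel--Cantelli, and Proposition~\ref{prop:limit}. Your truncation scale $\sqrt{\log N/N}$ is in fact the better choice. It gives $L=O((\log N)^{q-1})=o(\sqrt N)$. By contrast, the paper's bound $2qN^{\varepsilon(q-1/2)}$ with its choice $\varepsilon=1/q$ is of order $N^{1-1/(2q)}$, which exceeds $\sqrt N$ for $q\geqslant 2$, so Theorem~\ref{thm:concentrationmanif} gives no concentration there unless one takes $\varepsilon<1/(2q-1)$.
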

\begin{proof}
    The proof is based on computing the Lipschitz constant of $f:\mathscr{V}_2(\mathbb{R}^N)\to\mathbb{R}$ defined by
    \[
    f(\bu,\bv) = \frac{1}{N}\sum_{j=1}^N \left(
        t^2N\bu_j^2+s^2N\bv_j^2
    \right)^q
    =
    N^{q-1}\sum_{j=1}^N \left(
        t^2\bu_j^2+s^2\bv_j^2
    \right)^q. 
    \]
    We will use the fact that 
    \[
    \Vert f\Vert_{\mathrm{Lip}}\leqslant \sup_{\bu,\bv\in \mathscr{V}_2(\R^N)}
    \vert \nabla f(\bu,\bv)\vert_2.
    \]
    Note that $f$ is defined on $\R^N\times \R^N$ and thus we can use the ambient gradient norm. We have 
    \[
    \partial_{\bu_i}f(\bu,\bv)
    =
    N^{q-1}2qt^2\bu_i\left(
        t^2\bu_i^2+s^2\bv_i^2
    \right)^{q-1}
    \]
    and similarly for $\partial_{\bv_i}f(\bu,\bv)$. Thus, using the bounds $\vert \bu_i\vert,\vert \bv_i\vert\leqslant 1$, as well as $s^2+t^2=1$, we get the uniform bound 
    \[
        \Vert f\Vert_{\mathrm{Lip}}\leqslant \sup_{\bu,\bv\in\mathscr{V}_2(\R^N)}\vert \nabla f(\bu,\bv)\vert_2^2 \leqslant 2qN^{q-\frac{1}{2}}.
    \]
    Note that this bound is clearly not strong enough to use Theorem \ref{thm:concentrationmanif}. Indeed, it is a \emph{worst} case bound but not a \emph{typical} one, since $\vert \bu_i\vert^2 \leqslant N^{-1+\varepsilon}$ with overwhelming probability. We define a \emph{good event} by 
    \[
        E_n(\varepsilon) = \left\{
            (\bu,\bv)\in\mathscr{V}_2(\R^N),\vert \bu_i\vert^2\leqslant N^{-1+\varepsilon}, \vert \bv_i\vert^2\leqslant N^{-1+\varepsilon} 
    \right\}
    \]
    then from Theorem \ref{thm:concentrationmanif} with a union bound (which only changes the constant $c>0$ from the bound), we get that there exist $C,c>0$ such that for $N$ large enough,
    \begin{equation}\label{eq:goodevent}
    \Pb\left(
        E_N(\varepsilon)^c
    \right)\leqslant C\e^{-cN^{2\varepsilon}}.
    \end{equation}
    If we consider the restriction of $f$ on $E_n(\varepsilon)$ then the Lipschitz constant can be bounded by
    \[
    \Vert f_{\vert E_n(\varepsilon)}\Vert_{\mathrm{Lip}}\leqslant 
     \sup_{\bu,\bv\in\mathbb{V}_2(\R^N)}\vert \nabla f(\bu,\bv)\vert_2^2
     \leqslant 
     2qN^{\varepsilon(q-\frac{1}{2})}
    \]
    By McShane's theorem \cite[Theorem 6.2]{heinonen2001lectures}, we can extend $f_{\vert E_N(\varepsilon)}$ by a Lipschitz function $\hat{f}$ on $\mathscr{V}_2(\mathbb{R}^N)$ with the same Lipschitz constant as above. Besides, since we can write 
    \[
    \Eb[f]  = \Eb[\hat{f}]+\Eb[(f-\hat{f})\mathbf{1}_{E_N(\varepsilon)^c}]
    \]
    and we have $\vert f-\hat{f}\vert \leqslant CN^q$  by the explicit construction of $\hat{f}$ and the definition of $f$, we get, through \eqref{eq:goodevent},  that for $N$ large enough (depending only on $q$ and $\varepsilon$),
    \[
    \Eb[f] = \Eb[\hat{f}] + O(\e^{-c'N^{2\varepsilon}})
    \]
    for some constant $0<c'<c$.
    If we choose for instance $\varepsilon = \frac{1}{q}$ then from Theorem \ref{thm:concentrationmanif} we get that there exists $N_0(q)$ such that for $N\geqslant N_0(q)$,
    \begin{align*}
    \Pb\left(
        \left\{\left\vert 
            f(O_1,O_2)-\Eb[f(O_1,O_2]
        \right\vert
        \right.\right.&\left.\left.\geqslant N^{-\frac{1}{8q}}
        \right\}\cap E_N\left(\frac{1}{q}\right)
    \right)
    \\&
    \leqslant 
    \Pb\left(
        \left\vert 
            \hat{f}(O_1,O_2)-\Eb\left[\hat{f}(O_1,O_2)\right]
        \right\vert
        \geqslant N^{-\frac{1}{6q}}
    \right)
    \\&
    \leqslant C\e^{-cN^{\frac{1}{6q}}}.
    \end{align*}
    Since, \eqref{eq:goodevent} gives us
    \[
    \Pb\left(
        \left\{\left\vert 
            f(O_1,O_2)-\Eb[f(O_1,O_2]
        \right\vert
        \geqslant N^{-\frac{1}{8q}}
        \right\}\cap E_N\left(\frac{1}{q}\right)^c
    \right)
    \leqslant \Pb\left(E_N\left(\frac{1}{q}\right)^c\right)
    \leqslant C\e^{-cN^{\frac{2}{q}}}
    \]
    and the convergence of the expectation from Proposition \ref{prop:limit}, we get the final almost sure convergence using Borel--Cantelli's lemma.
\end{proof}

\section{Proof of the main theorem}\label{sec:proof_main}

\subsection{Proof of the main theorem: real line}

Let $w$ be a real eigenvalue of $G$; the discussion in Subsection \ref{subsec:real_eigs} shows that its eigenvector can be represented as $Oe_1$ where $O$ is Haar-distributed on $O_n(\mathbb{R})$. It is obvious that $R$ is thus uniformly distributed over the real sphere $\mathbb{S}^{N-1}_{\mathbb{R}}$, and that its IPR converges towards $q!$ as explained in Lemma \ref{pure_haar}.  

\subsection{Proof of the main theorem: depletion regime}

It was pointed in Lemma \ref{lemma:rho_y} that the joint distribution of $\sqrt{N}b, \sqrt{N}c$ conditionally on $\lambda = x + \ii y / \sqrt{N}$ is given by $\varrho_y$, and does not depend on $N$. Since $t$ and $s$ only depend on $b/c$ (see \eqref{eq:defst}), their conditional distribution is also independent of $N$. Consequently, conditionally on $\lambda = x+\ii y/\sqrt{N}$, $\PR_q(R_\lambda)$ has the same distribution as $\PR_q(tO_2 + \ii s O_1)$, and Corollary \ref{cor:pscv} applies and shows that

\begin{equation}\label{eq:conv_ipr}
\PR_q(R_\lambda) \xrightarrow[N\to \infty]{\mathrm{(d)}}q!S_y^{-q}L_q(S_y), 
\end{equation}
where $S_y$ is a random variable with the same distribution as $1/2st$ when $s,t$ are conditioned on $\lambda = x +\ii y$. Let us recall that $\delta = \sqrt{N}(b-c)$ and $y/\sqrt{N} = \sqrt{bc}$; from the definition of $s,t$, one sees that

\[
S_y =\frac{1}{2st} = \frac{1}{2}\sqrt{\frac{(b+c)^2}{bc}} = \frac{1}{2}\sqrt{\frac{(b-c)^2+4bc}{bc}} = \frac{\sqrt{\delta^2+4y^2}}{2\vert y\vert}.
\]
\begin{lemma}\label{lem:Sy}$S_y$ has the distribution of $\mathscr{N}(0,(1-\tau^2)/4y^2)$ conditioned on being greater than 1. 
\end{lemma}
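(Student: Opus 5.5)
This is a one-dimensional change of variables. By Lemma \ref{lemma:rho_y}, conditionally on $\lambda = x+\ii y/\sqrt{N}$, the variable $\delta=\sqrt{N}(b-c)$ has density
\[
\varrho_{y,\tau}(\delta)\propto \frac{\delta\,\e^{-\delta^2/(2(1-\tau^2))}}{\sqrt{\delta^2+4y^2}}\mathbf{1}_{\delta>0}.
\]
We just computed $S_y=\sqrt{\delta^2+4y^2}/(2|y|)$. The map $\delta\mapsto \sqrt{\delta^2+4y^2}/(2|y|)$ is a smooth increasing bijection from $]0,\infty[$ onto $]1,\infty[$. So the plan is to push $\varrho_{y,\tau}$ forward through this map and recognise the result as a truncated Gaussian density.

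First I invert the map: $\delta = 2|y|\sqrt{S^2-1}$. Then $\dd\delta = 2|y| S(S^2-1)^{-1/2}\,\dd S$, and $\sqrt{\delta^2+4y^2} = 2|y|S$. Substituting, the factor $\delta/\sqrt{\delta^2+4y^2}$ becomes $\sqrt{S^2-1}/S$. Multiplying by the Jacobian gives $2|y|$, which is a constant. The remaining exponential becomes
\[
\exp\!\Big(-\frac{4y^2(S^2-1)}{2(1-\tau^2)}\Big) \propto \exp\!\Big(-\frac{S^2}{2\,(1-\tau^2)/(4y^2)}\Big),
\]
because the factor $\e^{4y^2/(2(1-\tau^2))}$ does not depend on $S$. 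Hence the density of $S_y$ is proportional to $\e^{-S^2 4y^2/(2(1-\tau^2))}\mathbf{1}_{S>1}$. This is exactly a $\mathscr{N}(0,(1-\tau^2)/4y^2)$ density restricted to $]1,\infty[$, i.e. that law conditioned on being greater than $1$.

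Since both sides are probability densities with the same $S$-dependence, normalization is automatic. As a consistency check, I will verify that the constant matches $Z_{y,\tau}$ from Lemma \ref{lemma:rho_y}: the normalizer of the truncated Gaussian is $\int_1^\infty \e^{-2y^2S^2/(1-\tau^2)}\dd S = \frac{1}{2|y|}\sqrt{\pi(1-\tau^2)/2}\,\erfc\big(\sqrt{2/(1-\tau^2)}\,|y|\big)$. Combined with the factors $2|y|$ and $\e^{2y^2/(1-\tau^2)}$ this recovers $Z_{y,\tau}$, which also flags that the exponent in the stated $Z_{y,\tau}$ should read $2y^2$. Finally, I will write out the explicit density, which is the one to reference as \eqref{eq:density_Sy}.

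There is no real obstacle. The only care needed is the Jacobian bookkeeping and checking that no $S$-dependent factor survives besides the Gaussian one.
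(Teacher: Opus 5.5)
Your proof is correct and is essentially the paper's argument: invert $\delta = 2|y|\sqrt{S^2-1}$, note that the Jacobian cancels the factor $\delta/\sqrt{\delta^2+4y^2}$ up to the constant $2|y|$, and read off the truncated Gaussian exponent $-2y^2S^2/(1-\tau^2)$. Your normalization check is also right, and it correctly shows that the exponent in $Z_{y,\tau}$ should be $2y^2/(1-\tau^2)$. The explicit constant the paper displays in \eqref{eq:density_Sy} has similar small slips: the factor $(1-\tau^2)$ belongs in the numerator under the square root, and the $\erfc$ argument should be $\sqrt{2/(1-\tau^2)}\,|y|$.
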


\begin{proof}For simplicity, in the proof we don’t indicate the dependency on $y,\tau$ and simply note, eg, $\varrho$ instead of $\varrho_{y,\tau}$. 
    From Lemma \ref{lemma:rho_y}, we know the conditional density of $\delta$: 
    \[
    \varrho(\delta) = 
    \frac{1}{Z_{\tau,y}}
    \frac{\delta\e^{-\frac{\delta^2}{2(1-\tau^2)}}}{\sqrt{\delta^2+4y^2}}
    \mathbf{1}_{\delta>0}.
    \]
    Note that $\delta = 2\vert y\vert \sqrt{S^2-1}\eqqcolon h^{-1}(S)$ which is a strictly increasing function of $S$, so that by the change of variables formula we get the density $\eta$ of $S$, for $u>1$,
    \begin{align}
    \eta(u) = \varrho(h^{-1}(s))\left\vert \frac{\dd}{\dd s}h^{-1}(s)\right\vert &=
    \frac{2\vert y\vert \e^{\frac{2y^2}{1-\tau^2}}}{Z}\e^{-\frac{4y^2s^2}{2(1-\tau^2)}}\mathbf{1}_{s>1} \\
    &=
    \frac{1}{\sqrt{\frac{\pi}{8 y^2(1-\tau^2)}}\erfc\left(\sqrt{y\frac{2}{1-\tau^2}}\right)}\e^{-\frac{4y^2s^2}{2(1-\tau^2)}}\mathbf{1}_{s>1}.\label{eq:density_Sy}
    \end{align}

    This is exactly the density of $\mathscr{N}(0, \frac{1-\tau^2}{4y^2})$ conditioned on being greater than 1.
\end{proof}

This closes the proof of Theorem \ref{thm:main} in the depletion regime. Moreover, since $\ell_{q,y} = q! S_y^{-q}L_q(S_y)$, we can obtain the conditional density of $\ell_{q,y}$ using a similar change of variables. We only perform the computations for the real Ginibre ensemble, although there is no particular obstruction for $\tau>0$. 

\begin{lemma}\label{lem:legendrechangeofvar}
    When $\tau=0$ (real Ginibre ensemble), the conditional distribution of $\ell_{q,y}$ conditionally on $y$ is given by
    \[
    \gamma_y(\ell)
    =
    \frac{2\sqrt{2}\vert y\vert}{\sqrt{\pi}\erfc(\sqrt{2}y)}
    \frac{\e^{-2y^2g_q^{-1}(\ell)^2}}{\vert \Phi_q(g_q^{-1}(\ell))\vert}
    \mathbf{1}_{q!<\ell<(2q-1)!!}
    \]
    where we defined $g_q^{-1}$ as the inverse of the map $[1,+\infty)\to [q!,(2q-1)!!)$, $x\mapsto q!x^{-q}L_q(x)$ and
    \[
    \Phi_q(x) = \frac{q\cdot q!}{x^{q+1}(1-x^2)}\left(
        xL_{q-1}(x)-L_q(x)
    \right).
    \]
\end{lemma}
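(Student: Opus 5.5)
This is a one-dimensional change of variables from $S_y$ to $\ell_{q,y}=g_q(S_y)$, where $g_q(x)=q!x^{-q}L_q(x)$ on $[1,+\infty)$.

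\emph{Step 1: density of $S_y$.} Specialise \eqref{eq:density_Sy} to $\tau=0$. By Lemma \ref{lem:Sy}, $S_y$ has density
\[
\eta(u)=\frac{2\sqrt{2}\vert y\vert}{\sqrt{\pi}\,\erfc(\sqrt{2}\vert y\vert)}\,\e^{-2y^2u^2}\mathbf{1}_{u>1},
\]
since $\int_1^\infty \e^{-2y^2u^2}\dd u=\sqrt{\pi}\,\erfc(\sqrt2\vert y\vert)/(2\sqrt2\vert y\vert)$. This recovers the prefactor in the statement. I would recompute this normalisation directly rather than read it off \eqref{eq:density_Sy}, which has typos.

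\emph{Step 2: the map $g_q$.} By Lemma \ref{lem:legendremonot}, $g_q$ is a strictly increasing $C^1$ bijection from $(1,\infty)$ onto $(q!,(2q-1)!!)$, so $g_q^{-1}$ is well defined there. The change-of-variables formula then gives
\[
\gamma_y(\ell)=\eta\bigl(g_q^{-1}(\ell)\bigr)\,\bigl|g_q'\bigl(g_q^{-1}(\ell)\bigr)\bigr|^{-1}\mathbf{1}_{q!<\ell<(2q-1)!!}.
\]
The endpoint $S=1$ has probability zero and plays no role.

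\emph{Step 3: identify $g_q'=\Phi_q$.} Differentiating gives $g_q'(x)=q!\,x^{-q-1}\bigl(xL_q'(x)-qL_q(x)\bigr)$. I would then use the classical Legendre identity
\[
(x^2-1)L_q'(x)=q\bigl(xL_q(x)-L_{q-1}(x)\bigr).
\]
It can be derived from the generating function \eqref{eq:legendre_gf} by differentiating in $x$ and $z$ and comparing coefficients, or simply cited from \cite{jackson2004fourier}. Substituting it yields
\[
xL_q'(x)-qL_q(x)=\frac{q\,(x^2L_q-xL_{q-1})-q(x^2-1)L_q}{x^2-1}=\frac{q\,(L_q(x)-xL_{q-1}(x))}{x^2-1}.
\]
Hence
\[
g_q'(x)=\frac{q\cdot q!}{x^{q+1}(1-x^2)}\bigl(xL_{q-1}(x)-L_q(x)\bigr)=\Phi_q(x).
\]
Plugging this into Step 2 gives the claimed formula.

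The only delicate point is Step 3: getting the Legendre identity with the right signs so that the expression matches $\Phi_q$ exactly. Positivity of $g_q'$ on $(1,\infty)$ from Lemma \ref{lem:legendremonot} serves as a sanity check on the sign. I would also check the result against Corollary \ref{cor:smalllq} at $q=2$. There $g_2(x)=3-x^{-2}$, so $g_2^{-1}(\ell)=(3-\ell)^{-1/2}$ and $g_2'(x)=2x^{-3}$. This gives $\gamma_{2,y}(\ell)\propto \e^{-2y^2/(3-\ell)}(3-\ell)^{-3/2}$, which has the same functional form as the corollary.
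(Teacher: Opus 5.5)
Your proposal is correct and follows the paper's proof. Both proceed by a one-dimensional change of variables from $S_y$ (with its $\tau=0$ density) through the strictly increasing map $x\mapsto q!x^{-q}L_q(x)$ of Lemma \ref{lem:legendremonot}, with the Jacobian identified as $\Phi_q$ via the Legendre identity $(1-x^2)L_q'(x)=q(L_{q-1}(x)-xL_q(x))$, which is yours up to sign; your direct recomputation of the normalising constant (avoiding the typos in \eqref{eq:density_Sy}) and your $q=2$ check against Corollary \ref{cor:smalllq} are sensible additions.
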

\begin{proof}
    We know that for $x>1$, $x\mapsto q!x^{-q}L_q(x)$ is strictly increasing from Lemma \ref{lem:legendremonot} and it takes value in $[q!,(2q-1)!!]$, therefore, for $s\in[q!,(2q-1)!!]$ there exists a smooth inverse $s = g_q^{-1}(\ell)$ and the density is then given by
    \[
    \gamma_y(\ell) = \eta_y(g_q^{-1}(\ell))\left\vert \frac{\dd}{\dd \ell} g_q^{-1}(\ell)\right\vert
    \mathbf{1}_{q!<\ell<(2q-1)!!}
    \]
    and we can compute 
    \[
    \frac{\dd}{\dd s} \left(    
        s^{-q}L_q(s)
    \right)
    =
    \frac{1}{s^q}\left(
       L_q'(s)-\frac{qL_q(s)}{s}
    \right)
    =
    \frac{q}{s^{q+1}(1-s^2)}\left(
        sL_{q-1}(s)-L_q(s)
    \right)
    \eqqcolon \frac{1}{q!}\Phi_q(s)
    \]
    where we used the fact that $(1-x^2)L_q'(x)=q(L_{q-1}(x)-xL_q(x)).$ Finally, we obtain 
    \[
    \gamma_y(\ell) = \frac{\eta_y(g_q^{-1}(\ell))}{\vert\Phi(g_q^{-1}(\ell))\vert}\mathbf{1}_{q!<\ell<(2q-1)!!}
    \]
    which gives the final result.
\end{proof}
While we could not find an explicit formula of $g^{-1}_q$ for every $q$ it is possible to invert the function by hand for small values of $q$ which is given in Corollary \ref{cor:smalllq}.

\begin{proof}[Proof of Corollary \ref{cor:smalllq}]
    We start with the case $q=2$, we see that 
    \[
    \ell = \frac{2}{s^2}L_2(s) =
        3-\frac{1}{s^2}
    \Longleftrightarrow
    s = \frac{1}{\sqrt{3-\ell}} = g_2^{-1}(\ell)
    \quad\text{and}\quad \Phi_2(s) = \frac{2}{s^3}.
    \]
so that
    \[
    \Phi_2(g_2^{-1}(\ell)) = \frac{2}{(3-\ell)^{\frac{3}{2}}}
    \]
    and we finally get the density
    \[
    \rho_{\ell_2,y}(\ell)
    =
    \frac{\sqrt{2}\vert y\vert}{\sqrt{\pi}\erfc(\sqrt{2}\vert y\vert)}\frac{\e^{-\frac{2y^2}{3-\ell}}}{(3-\ell)^{\frac{3}{2}}}\mathbf{1}_{2<\ell<3}.
    \]
    For the case $q=3$, we see that
    \[
    \ell = \frac{6}{s^3}L_3(s) = 15-\frac{9}{s^2} \Longleftrightarrow
    s = \frac{3}{\sqrt{15-\ell}}
    =
    g_3^{-1}(\ell)
    \quad\text{and}\quad 
    \Phi_3(s) = \frac{18}{s^3}
    \]
    so that $\Phi_3(g_3^{-1}(\ell)) = 2(15-\ell)^{\frac{3}{2}}/3$, and we finally get the density
    \[
    \varrho_{\ell_3,y}(\ell)
    =
    \frac{3\sqrt{2}\vert y\vert}{\sqrt{\pi}\erfc(\sqrt{2}\vert y\vert)}\frac{\e^{-\frac{18y^2}{15-\ell}}}{(15-\ell)^{\frac{3}{2}}}
    \mathbf{1}_{6<\ell<15}
    \]
    The case $q=4$ is slightly more complicated but we see that 
    \[
    \ell = \frac{24}{s^4}L_q(s) = 105-\frac{90}{s^2}+\frac{9}{s^4}.
    \]
    Using the substitution $u=\frac{1}{x^2}$ and remembering that we want $\ell=24$ for $s=1$ as well as $s>0$, we obtain that 
    \[
    s = \sqrt{\frac{3}{15-\sqrt{120+\ell}}}=g_4^{-1}(\ell).
    \]
    We can also compute $\Phi_4(s) = \frac{180s^2-36}{s^5}$.     In particular, we see that 
    \[
    \Phi_4(g_4^{-1}(\ell))
    =
    \left(
        \frac{540}{15-\sqrt{120+\ell}}-36
    \right)\left(
    \frac{15-\sqrt{120+\ell}}{3}
    \right)^{\frac{5}{2}} = \frac{4}{\sqrt{3}}\sqrt{120+\ell}\left(
        15-\sqrt{120+\ell}
    \right)^{\frac{3}{2}}
    \]
    and putting everything together, we get the formula from Corollary \ref{cor:smalllq}.
\end{proof}

\subsection{Proof of the main theorem: in the bulk}

By combining \eqref{eq:E[IPR|st]} with the representation \eqref{eq:legendre_moments}, we see that conditionally on $\lambda = x + \ii y = x + \ii \frac{y\sqrt{N}}{\sqrt{N}}$, we have 
\begin{equation}\label{eq:IPR_sqrtN}\Eb[\PR_q(R_\lambda)\mid s,t] = \frac{N^q}{N(N+2)\dots (N+2q - 2)} q! S_{y\sqrt{N}}^{-q}L_q(S_{y\sqrt{N}})\end{equation}
where $S_y = 1 / 2st$ has distribution as in Lemma \ref{lem:Sy}. 

\begin{lemma}$S_y$ converges in distribution towards 1.   \end{lemma}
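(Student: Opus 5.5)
The statement is meant as $y\to\infty$: in \eqref{eq:IPR_sqrtN} the relevant variable is $S_{y\sqrt N}$ with $y>0$ fixed and $N\to\infty$. So we show that $S_Y\to 1$ in distribution (equivalently in probability, the limit being deterministic) as $Y\to+\infty$.

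We use the explicit law from Lemma \ref{lem:Sy}. By \eqref{eq:density_Sy}, $S_Y$ has density proportional to $\e^{-2Y^2u^2/(1-\tau^2)}\mathbf{1}_{u>1}$. Since $S_Y\geqslant 1$ almost surely, it suffices to show that $\Pb(S_Y>1+\varepsilon)\to 0$ for every fixed $\varepsilon>0$.

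Write $a=2Y^2/(1-\tau^2)$. The tail probability is
\[
\Pb(S_Y>1+\varepsilon)=\frac{\int_{1+\varepsilon}^\infty \e^{-au^2}\,\dd u}{\int_{1}^\infty \e^{-au^2}\,\dd u}
=\frac{\erfc\big(\sqrt a\,(1+\varepsilon)\big)}{\erfc\big(\sqrt a\big)}.
\]
To bound this without special-function asymptotics, substitute $u=1+v$ in both integrals. Since $u^2\geqslant 1+2v$, the numerator is at most $\e^{-a}\int_\varepsilon^\infty \e^{-2av}\,\dd v=\e^{-a}\e^{-2a\varepsilon}/(2a)$.

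For the denominator, $u^2\leqslant 1+3v$ for $v\in[0,1]$. Hence
\[
\int_1^\infty \e^{-au^2}\,\dd u\geqslant \e^{-a}\int_0^1\e^{-3av}\,\dd v=\e^{-a}\,\frac{1-\e^{-3a}}{3a}.
\]

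Dividing the two bounds gives
\[
\Pb(S_Y>1+\varepsilon)\leqslant \frac{3}{2}\,\frac{\e^{-2a\varepsilon}}{1-\e^{-3a}}.
\]
This tends to $0$ as $a\to\infty$, that is, as $Y\to\infty$. Hence $S_Y\to 1$ in probability, and therefore in distribution.

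There is no real obstacle. The only point needing care is that both the numerator and the denominator are exponentially small like $\e^{-a}$, so they must be compared as a ratio rather than bounded separately; the linearisation of $u^2$ near $u=1$ above handles this.

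For use in the bulk regime, continuity of $x\mapsto q!x^{-q}L_q(x)$ at $1$ together with \eqref{eq:IPR_sqrtN} then gives a conditional mean tending to $q!L_q(1)=q!$.
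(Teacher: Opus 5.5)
Your proof is correct and takes the same approach as the paper: both write $\Pb(S_Y>1+\varepsilon)$ as a ratio of Gaussian tail integrals and show that this ratio vanishes as $Y\to\infty$. The only difference is that the paper appeals to the Mills-ratio asymptotics for Gaussian tails, whereas you obtain the explicit non-asymptotic bound $\tfrac{3}{2}\e^{-2a\varepsilon}/(1-\e^{-3a})$ by linearising $u^2$ near $u=1$.
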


\begin{proof}If $X\sim \mathscr{N}(0,1)$ then $S_y$ has the same distribution as $X/v$ conditioned on being greater than 1, with $v = 2y/\sqrt{1-\tau^2}$. Consequently, for any $t>1$, 
$\Pb(S_y>t) = \Pb(X>tv)/\Pb(X>v)$. The Gaussian tail estimates show that this is equivalent to $e^{-t^2v^2/2}/t e^{-v^2/2}=e^{-v^2(t^2-1) / 2}/t$ which goes to 0 as $y \to \infty$ since $t^2-1>0$. 
\end{proof}

Since $x^{-q}L_q(x)$ is a bounded function on $[1,\infty[$ and is equal to 1 when $x=1$, we see that when $N\to \infty$, the random variables $S_{y\sqrt{N}}^{-q}L_q(S_{y\sqrt{N}})$ converge in distribution towards $q!$; since the limit is constant, the convergence also holds in probability. Using concentration arguments similar to the ones used in the proof of Corollary \ref{cor:pscv}, we could upgrade to almost sure convergence of $\IPR_q(R_\lambda)$ conditionally on $\lambda = x+\ii y,$ we omit the details.

\bibliographystyle{plain}
\bibliography{bibli.bib}

\appendix

\section{Spectral decomposition of \texorpdfstring{$2\times 2$}{2x2} real matrices}\label{app:2x2}

\subsection{The eigenvalues and eigenvectors of \texorpdfstring{$2\times 2$}{2x2}  matrices}

Let $a,b,c,d$ be real numbers, and let
$$A = \begin{bmatrix}
    a & b \\ c & d
\end{bmatrix}.$$

The following proposition can be checked by elementary means.  

\begin{prop}
    
    The matrix $A$ has eigenvalues $\lambda_\pm = m \pm \sqrt{m^2 - p}$, where $m=(a+d)/2$ and $p= ad - bc$. 
    If $c \neq 0$, the associated right-eigenvectors are 
    $$u_\pm =  \begin{bmatrix}
        \lambda_\pm - d \\ c 
    \end{bmatrix} .$$
\end{prop}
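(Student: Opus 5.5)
The proof is a direct computation in two parts: first the eigenvalues via the characteristic polynomial, then a check that the proposed vectors are eigenvectors.

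\textbf{Eigenvalues.} The characteristic polynomial of $A$ is
$$\chi_A(\lambda)=\det(\lambda I_2 - A) = (\lambda-a)(\lambda-d) - bc = \lambda^2 - (a+d)\lambda + (ad-bc) = \lambda^2 - 2m\lambda + p.$$
Completing the square gives $\chi_A(\lambda) = (\lambda - m)^2 - (m^2 - p)$. Its roots are therefore $\lambda_\pm = m \pm \sqrt{m^2-p}$, where $\sqrt{\cdot}$ denotes either complex square root when $m^2-p<0$. This is the case relevant for the Schur blocks. Since the entries are real, $\lambda_- = \overline{\lambda_+}$ in that case. Specializing to the block $\begin{bmatrix} x & b\\ -c & x\end{bmatrix}$ gives $m=x$ and $p = x^2+bc$. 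Hence $m^2-p=-bc<0$, and $\lambda_\pm = x\pm \ii\sqrt{bc}$.

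\textbf{Eigenvectors.} Assume $c\neq 0$ and set $u_\pm = (\lambda_\pm - d,\, c)^\top$. This vector is nonzero because its second coordinate is $c\neq 0$. I then compute $Au_\pm$ coordinatewise and compare with $\lambda_\pm u_\pm$.

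The second coordinate of $Au_\pm$ is $c(\lambda_\pm - d) + dc = c\lambda_\pm$, which equals $\lambda_\pm$ times the second coordinate of $u_\pm$.

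The first coordinate of $Au_\pm$ is $a(\lambda_\pm - d) + bc$. It should equal $\lambda_\pm(\lambda_\pm - d)$. Their difference is
$$\lambda_\pm(\lambda_\pm - d) - a(\lambda_\pm-d) - bc = (\lambda_\pm - a)(\lambda_\pm - d) - bc = \chi_A(\lambda_\pm) = 0,$$
so the first coordinate also matches.

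\textbf{Main step.} The only point needing care is that the first coordinate reduces exactly to $\chi_A(\lambda_\pm)$; this identity is what makes the check work. No step here is a real obstacle.

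When $\lambda_+=\lambda_-$ (that is, $m^2=p$), the two formulas give the same vector. This does not occur for the Schur blocks, since there $bc>0$.

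For later use, I would record the special case of the Schur block: $u_+ = (\ii\sqrt{bc},\,-c)^\top$. Dividing by $-\sqrt{c}\sqrt{b+c}$ (a normalization by a nonzero scalar, up to the overall sign convention) yields a unit vector with entries proportional to $(\ii s, t)$ as in \eqref{eq:defst}. Indeed, $|u_+|^2 = bc + c^2 = c(b+c)$.
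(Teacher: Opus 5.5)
Your proof is correct and is exactly the elementary verification the paper has in mind. The characteristic polynomial $\lambda^2-2m\lambda+p$ gives $\lambda_\pm$, and the only nontrivial coordinate of $Au_\pm-\lambda_\pm u_\pm$ equals $-\chi_A(\lambda_\pm)=0$.

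One small correction to your closing side remark, which goes beyond the statement itself. For $b,c>0$, normalizing $u_+=(\ii\sqrt{bc},-c)^\top$ gives $\pm(\ii s,-t)^\top$, which is a multiple of the complex conjugate of $(\ii s,t)^\top$ and not of $(\ii s,t)^\top$ itself. So $(\ii s,t)^\top$ is really a $\bar\lambda$-eigenvector, and the paper's main text makes the same slip. This is harmless for the IPR, since only $|\ii s O_{1j}\pm tO_{2j}|^2=s^2O_{1j}^2+t^2O_{2j}^2$ enters.
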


When $m^2-p<0$, the eigenvalues $\lambda_\pm$ are complex conjugates; we will restrict to this case. Let us pick one of those complex eigenvalues, say $\lambda = \lambda_+$, and an associated eigenvector, 
$$\begin{bmatrix}
    \lambda_++\sqrt{m^2 - p} - d \\ c 
\end{bmatrix} = \begin{bmatrix}
    m+\ii \sqrt{ p - m^2} - d \\ c 
\end{bmatrix} .$$
The euclidean norm of this eigenvector is \begin{align}\sqrt{c^2+|\lambda-d|^2}&=\sqrt{c^2+(m-d)^2+\sqrt{p-m^2}^2}\\ &=\sqrt{c^2 + m^2 + d^2 - 2md + p - m^2} \\
&= \sqrt{c^2 + d^2 - 2md + ad - bc} \\
&= \sqrt{c^2 - bc}  \end{align} hence the unit eigenvector is given by
\begin{equation}
    \begin{bmatrix}z \\ t \end{bmatrix}:=\frac{1}{\sqrt{c^2-bc}}\begin{bmatrix}
        m+\ii\sqrt{p-m^2} - d \\ c 
    \end{bmatrix}
\end{equation}
so that $|z|^2+t^2=1$ (note that $t$ is real and $z$ can be complex). For future reference, we note that 
\begin{align}\label{def:z_t_0}
    & z  = \frac{a-d}{2\sqrt{c^2 - bc}} + \ii \sqrt{\frac{p-m^2}{c^2-bc}} && t  =\sqrt{\frac{1}{1-b/c}}.
\end{align}

\subsection{The special case of \texorpdfstring{\eqref{B}}{(2.1)}} When $A$ has the shape given in \eqref{B}, that is 
$$A= \begin{bmatrix}
    x & b \\ -c & x 
\end{bmatrix}$$
with $bc>0$ and $b\geqslant c$, the former analysis gives  $p = x^2 + bc$, $m=x$, $m^2 - p = -bc <0$. There are two complex conjugate eigenvalues $\lambda_\pm = x \pm \ii \sqrt{bc}$ and the eigenvector associated with $\lambda_+$ is 
$$\begin{bmatrix}z \\ t\end{bmatrix}$$
and we directly see from the formula \eqref{def:z_t_0} that $z$ is purely complex, say $z = \ii s$, with $s,t$ given by
\begin{align}\label{zt}
    & s:= \sqrt{\frac{bc}{c^2 + bc}} = \sqrt{\frac{1}{1+c/b}}, && t  =\sqrt{\frac{1}{1 + b/c}}.
\end{align}
For future reference, we note that $s^2 + t^2 = 1$ and that
\begin{equation}\label{tz} ts = \sqrt{\frac{1}{(1 + c/b)(1 + b/c)}} = \frac{\sqrt{bc}}{b+c} = \frac{\mathfrak{Im}(\lambda)}{b+c}. \end{equation}

\vspace*{1cm}

\hrulefill

\end{document}